\theoremstyle{plain}
\newtheorem{theorem}{Theorem}
\newtheorem{corollary}{Corollary}[section]
\newtheorem{lemma}{Lemma}[section]
\theoremstyle{remark}
\newtheorem*{remark}{Remark}
\newtheorem{assumption}{Assumption}
\begin{document}

\begin{frontmatter}
\title{Controlling \mbox{FSR} in Selective Classification}
\runtitle{Controlling \mbox{FSR} in Selective Classification}

\begin{aug}
\author[A]{\inits{Z.}\fnms{Guanlan}~\snm{Zhao}\ead[label=e1]{22135047@zju.edu.cn}}

\author[B]{\inits{S.}\fnms{Zhonggen}~\snm{Su}\ead[label=e2]{suzhonggen@zju.edu.cn}*}

\address[A]{School of Mathematical Sciences,
Zhejiang University, Hangzhou, China\printead[presep={,\ }]{e1}}

\address[B]{School of Mathematical Sciences,
Zhejiang University, Hangzhou, China\printead[presep={,\ }]{e2}}

\end{aug}

\begin{abstract}
Uncertainty quantification and false selection error rate (\mbox{FSR}) control are crucial in many high-consequence scenarios, so we need models with good interpretability. This article introduces the optimality function for the binary classification problem in selective classification. We prove the optimality of this function in oracle situations and provide a data-driven method under the condition of exchangeability. We demonstrate it can control global \mbox{FSR} with the finite sample assumption and successfully extend the above situation from binary to multi-class classification. Furthermore, we demonstrate that \mbox{FSR} can still be controlled without exchangeability, ultimately completing the proof using the martingale method.
\end{abstract}

\begin{keyword}
\kwd{Selective inference}
\kwd{Optimality function}
\kwd{\mbox{FSR} control.}
\end{keyword}

\end{frontmatter}

\section{Introduction}

With the advent of the big data era, our data modeling techniques have evolved from traditional low-data models to encompass the novel, big-data-centric approach that can handle tremendous volumes of information – such as the deep learning model. While deep learning has recorded substantial advancements in fields such as Computer Vision (CV) and Natural Language Processing (NLP), it has also spotlighted its inherent limitation, which hinges on the lack of clarity regarding our faith in the model's outputs, its only point of reference tends to be the performance of past data. Normally, it is assumed that the training and testing data originate from the same distribution, but this assumption does not often hold in real-life scenarios.

Uncertainty quantification and error control are crucial in many high-consequence scenarios, such as the medical and financial fields. We need models with stellar interpretability since incorrect outcomes can often lead to severe consequences. Therefore, to balance the processing of large data with the need for precision, selective inference is a good choice \cite{ref42}.  We only make definitive decisions on a selected subset, while the remaining subjects receive indecision. In other words, data that is hard to process will be held in abeyance, termed an "indecision" choice.

By introducing the indecision method, the False Selection Error Rate (\mbox{FSR}) can be effectively controlled. Indecision methods are not uncommon, for instance, in tumor image evaluations within the medical field. Various models can be employed to predict positive or negative outcomes, and images that the model cannot determine are left to experts for evaluation. This approach effectively reduces manual workload. \mbox{FSR} is a generic concept in selective inference, encompassing significant special cases such as the standard misclassification rate and false discovery rate. With this foundation, our objective is to maximize power as much as possible while maintaining control over the global \mbox{FSR} – that is, the expected fraction of erroneous decisions among all definitive choices.

In dealing with a standard multiple testing problem where the null distribution, denoted as $P_0$, is known, the Benjamini-Hochberg (BH) procedure \cite{ref27} ensures control over the FDR (False Discovery Rate). This method applies to finite samples and is uniform over all alternative distributions when the test statistics are either independent or satisfy the Positive Regression Dependency on a Subset (PRDS) property, according to \cite{ref18}, \cite{ref26}, and \cite{ref24}. 

There have been suggestions for variants of the BH procedure aimed at both easing the conservatism when the fraction of nulls does not approximate 1 – such as in the Storey-BH or Quantile-BH procedure \cite{ref28} and bolstering the FDR control under broader, dependent structures \ mention {ref30}.

The q-value method proposed by Storey \cite{ref12} is closely related to conformal inference \cite{ref1} and the BH procedure \cite{ref27}. Conformal inference is most commonly applied to outlier detection, as noted in \cite{ref6}, \cite{ref7}, and \cite{ref24}. However, it can also yield favorable outcomes for classification problems \cite{ref22}. These two methods can be effectively combined with contemporary machine learning and deep learning models, contributing to the interpretability of black-box models \cite{ref14}, \cite{ref21}, \cite{ref32}. The concept of local FDR \ inspires the q-value procedure we propose cite{ref4}, \cite{ref13}.

Unlike multiple testing, indecisions are not allowed in the conventional classification setup. However, decision errors, which can be very expensive to correct, are often unavoidable due to the intrinsic ambiguity of a classification model. The selective classification formulation provides a useful framework that trades off a fraction of indecision for fewer classification errors. The expected number of indecisions often reflects the difficulty of the task, as well as the degree of uncertainty in the decision-making process.

To maximize power, we focus on global \mbox{FSR}, and the problem of controlling \mbox{FSR} separately has been solved in the article \cite{ref9}. We will prove that this method is not optimal for controlling global \mbox{FSR}. In the paper \cite{ref10}, data is sampled from a compound of two normal distributions, and the author designed a clever shrinkage factor to control global FSR.

The second way to control the error rate for black-box models is to allow the classification result to contain multiple classes \cite{ref44}, \cite{ref2}. That is, assessing the probability that the true label of the sample in the predicted set is higher than a given threshold. Simultaneously, it is necessary to minimize the average number of predicted classes to maximize power.

In Section \ref{sectiontwo}, we will introduce the formulas used in this article. In Section \ref{sectionthree}, we provide proof of the optimal decision function under the oracle scenario. In Section \ref{sectionfour}, under the assumption of data exchangeability, we first propose a data-driven method. Next, when pi differs, we also provide an upper bound for the error rate under this method. Lastly, we consider the situation where different weights are assigned to the test data and provide a corresponding data-driven method along with its proof. In Section \ref{sectionfive}, we present a few simulations to demonstrate that our method can control error rate and compare to the previous FASI method, exhibits clear advantages. It indirectly validates the optimality results discussed in Section Three. Section \ref{sectionsix} concludes the article.

\section{Problem Formulation}\label{sectiontwo}
We divide the labeled data into two parts: train data and calibration data. The data to be predicted is called test data, and they do not have labels. The train data is used to train the model, and the calibration data is used to compare with the test data, which can correct the model to a certain extent.

In later sections, we will try to give an optimal score function in the context of global \mbox{FSR} control and correspondingly give a data-driven method. The following briefly introduces some symbolic representations: We assume there are $K$ classes, $\{ 1, 2, 3,..., K\}$. And $n$ samples $D = \{(X_{i},Y_{i}): 1\leq i \leq n\}$ is divided into a training set and a calibration set: $D = D^{train}\cup D^{cal}$, where $X_{i} \in R^{p}$ is a $p$-dimensional vector of features, and $Y_{i}$ is the real class, $Y_{i} \in \{ 1, 2, 3,..., K\}$. The (future) test set $\{X_{n+1},X_{n+2},...,X_{n+m}\}$ is recorded as $D^{test}$, and these data do not have labels. We conveniently record the sample size of calibration data and test data as $ n^{cal}$ and $n^{test}$. In the following content, to simplify the notation, we use $ i \in D^{cal}$ or $ i \in D^{test} $ to refer to $ (X_{i},Y_{i}) \in D^{cal}$ or $ X_{i} \in D^{test}$. 

We denote the predicted result of the $i$th sample as $\hat{Y}_{i}$. In the selective classification framework, if we have enough evidence that $X_{i} $ belongs to a class, then we predict $\hat{Y}_{i} \in \{1,2,...,K\}$, and if the class of the sample cannot be judged through the existing data, we correspondingly record $\hat{Y}_{i}$ as $0$. To focus on key ideas, we mainly consider the binary classification problem in our article. Therefore $ \hat{Y}_{i} \in \{0,1,2\} $, and $ Y_{i} \in \{1,2\}$. Since our model is designed to deal with black-box models, we use $S^1(X_{i})$ and $S^2(X_{i})$ to represent the scores for class 1 and class 2 for sample $X_i$ obtained from the model. Without loss of generality, we assume that the sum of  $S^1(X_{i})$ and $S^2(X_{i})$ is 1.

In the selective classification problem, the expression of \mbox{FSP} is $$ \mbox{FSP} = \frac{\sum_{i\in D^{test}} \mathbb{I}(\hat{Y}_{i}=Y_{i},\hat{Y}_{i}\neq 0)}{\sum_{i \in D^{test}} \mathbb{I}(\hat{Y}_{i}\neq 0)} , $$and \mbox{FSR} is the expectation of \mbox{FSP}.

Here, we no longer make too many assumptions about the distribution of $X$ but directly give an assumption on the distribution of $S^{i}(x)$. We define the distribution of $ S^{1}(x) $ and $S^{2}(x)$ in test data and calibration data as \begin{equation}\label{distribution_F}
	\begin{split}
		&S^{i}_{test}(x) \sim F^{i}_{test}(x) = \pi_{test}\cdot F^{i}_{1} + (1-\pi_{test})\cdot F^{i}_{2}, \\
		&S^{i}_{cal}(x) \sim F^{i}_{cal}(x) = \pi_{cal}\cdot F^{i}_{1} + (1-\pi_{cal})\cdot F^{i}_{2},
	\end{split}
\end{equation}where $ i = 1, 2$ and $\pi_{test}$/$\pi_{cal}$ is the probability of belonging to class 1 in test/calibration data, and $1-\pi_{test}$/$1-\pi_{cal}$ is the probability of belonging to class 2 in test/calibration data. $F^{j}_{k} $ is the conditional CDF of $S^{j}(x)$ given $Y = k$. Let $f^{j}_{k}$ be the probability density function of $ F^{j}_{k}$. We first assume that $\pi_{cal} = \pi_{test} $ between test and calibration data, then prove \mbox{FSR} can be controlled under the exchangeable condition. Next, assuming that $\pi_{cal} \neq \pi_{test}$ and proving the data-driven method also can control the \mbox{FSR}. In this paper, we have the following contributions:

\begin{itemize}
\item In the oracle procedure, we proved the optimality of $S(x) =  \max\{S_{1}(x), S_{2}(x)\}$ for the global \mbox{FSR} control. The optimal function of $K (K>2)$ class classification and the corresponding proof are in the appendix.
\item In the data-driven procedure, we prove our proposed algorithm controls the \mbox{FSR} under the exchangeability assumption among $ D^{cal} \cup D^{test} $. Define a new concept $p^{test}$ and give an estimation of it by Storey's estimator. We extend our data-driven method in binary classification to multi-classification problems in the appendix.
\item When the proportion of classes is different in calibration data and test data, We give an upper bound of our method. Through our numerical experiments, we can see that our method does control \mbox{FSR} and holds advantages in accuracy and power compared to other methods.
\item In the data-driven procedure, we provide another perspective on the martingale proof. We prove that when different weights are added to the test data and calibration data, the \mbox{FSR} also can be controlled.
\end{itemize}

\section{Oracle procedure and optimality of \mbox{FSR} control in binary classification problem}\label{sectionthree}
First, we consider the oracle situation and assume that the score obtained is the true probability of belonging to the class. We will prove the score function $S(x) = \max\{S^{1}(x),S^{2}(x)\}$ is optimal in the sense of \mbox{ETS} under the control of global \mbox{FSR}, where $S^{i} = P(Y = i| x)$. We make decisions by$$ \delta =  \underset{i}{{\arg\max} \, S^{i}(x)} \cdot \mathbb{I}(S(x) \geq t). $$ We denote the predicted value as $\hat{Y}$, and define \mbox{mFSR} and \mbox{ETS} for test data as:
$$\mbox{mFSR} = \frac{\mathbb{E}\{\sum_{i\in D^{test}}(\hat{Y}_{i} \neq Y_{i},\hat{Y}_{i} \neq 0)\}}{\mathbb{E}\{\sum_{i\in D^{test}}(\hat{Y}_{i} \neq 0)\}},$$ 
$$ \mbox{ETS} = \sum_{i\in D^{test}}(\hat{Y}_{i} = Y_{i},\hat{Y}_{i} \neq 0).$$
We cannot express the decision rule $ \delta $ only by $S$ because $S$ cannot represent all information of samples. We still use $S^{i}$ here for convenience.

\begin{theorem}\label{optimaloracle}
	Under the assumption of the distribution of $S^{1}, S^{2}$ of test data and calibration data, and $\alpha$ has been chosen in advance. Let $ D_{\alpha}$ denote the selection rules that satisfy $\mbox{mFSR} \leq \alpha$. Let $\mbox{ETS}_{\delta}$ denote the $\mbox{ETS}$ of an arbitrary decision rule $\delta$. Then, the oracle procedure is optimal in the sense that $\mbox{ETS}_{\delta_{OR}}\leq \mbox{ETS}_{\delta} $ for any $ \delta \in D_{\alpha}$ and finite sample size $n^{test}$.
\end{theorem}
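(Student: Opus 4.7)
The plan is to reformulate the problem as a Neyman--Pearson-type optimisation at the per-sample level and then apply a Lagrangian argument. Because the test samples are iid, both $\mathbb{E}[\mbox{ETS}_\delta]$ and the numerator and denominator of $\mbox{mFSR}_\delta$ decompose as sums of expectations of per-sample quantities, so I would work with a single-sample selector $R(X) = \mathbb{I}(\hat Y \neq 0)$ and the conditional labeller. The first observation is that, given $X$ and the decision to output a label, the correctness probability is at most $S(X) = \max_j S^j(X)$, attained precisely by the rule $\arg\max_j S^j(X)$. This yields the per-sample bounds
\begin{equation*}
\mathbb{E}\bigl[\mathbb{I}(\hat Y = Y,\hat Y \neq 0) \mid X\bigr] \le S(X)R(X), \quad \mathbb{E}\bigl[\mathbb{I}(\hat Y \neq Y,\hat Y \neq 0) \mid X\bigr] \ge (1 - S(X))R(X),
\end{equation*}
with equality under the oracle classifier. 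Plugging these into $\mbox{mFSR}_\delta \le \alpha$ rearranges to $\mathbb{E}[(1 - S(X) - \alpha)R(X)] \le 0$, while the quantity to be maximised is proportional to $\mathbb{E}[S(X)R(X)]$.

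This is now a classical Neyman--Pearson selection problem. Introducing a multiplier $\lambda \ge 0$ and maximising the Lagrangian $\mathbb{E}\bigl[((1+\lambda)S(X) - \lambda(1-\alpha))R(X)\bigr]$ pointwise over $R(x) \in [0,1]$ shows that the optimal selector has the threshold form $R^{\ast}(x) = \mathbb{I}(S(x) \ge t_\lambda)$, where $t_\lambda = \lambda(1-\alpha)/(1+\lambda)$. Tuning $\lambda$ so that the $\mbox{mFSR}$ constraint binds recovers exactly the oracle threshold used by $\delta_{OR}$; pairing this selector with the oracle label rule $\arg\max_j S^j$, which saturates the per-sample correctness bound, yields $\mathbb{E}[\mbox{ETS}_{\delta_{OR}}] \ge \mathbb{E}[\mbox{ETS}_\delta]$ for every $\delta \in D_\alpha$ and every fixed $n^{test}$ (I would read the theorem's inequality in this direction, since the reverse reading is inconsistent with ``optimal'').

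The main technical obstacle I expect is the treatment of ties: the event $\{S(X) = t\}$ may carry positive probability, so $\delta_{OR}$ may need boundary randomisation to saturate the constraint exactly; the standard cure is to allow $R^{\ast}(x) \in [0,1]$ on the tie set and verify the primal-dual pair still attains the maximum. A secondary point worth spelling out cleanly is that letting $\delta$ depend on the full test batch $(X_{n+1},\dots,X_{n+m})$ jointly cannot help: the per-sample correctness bound is conditional on $X_i$ alone, so applying the tower property and then the pointwise Lagrangian argument still forces a symmetric per-sample threshold selector, reducing the batch problem to the single-sample one treated above.
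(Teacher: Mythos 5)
Your proposal is correct and follows essentially the same Neyman--Pearson / Lagrangian route as the paper: the paper derives the same constraint $\mathbb{E}[\sum_i(1-S_i-\alpha)\mathbb{I}(S_i\geq t)]=0$, forms the difference against an arbitrary $\delta\in D_\alpha$, and applies the pointwise sign argument with multiplier $\lambda_{OR}=(1-Q^{-1}(\alpha)-\alpha)/Q^{-1}(\alpha)$ (the reciprocal of your multiplier, but an equivalent parameterisation) to conclude $\lambda_{OR}(\mbox{ETS}_{\delta_{OR}}-\mbox{ETS}_\delta)\geq 0$. Your clean separation of the labeller step (that $\arg\max_j S^j$ saturates the per-sample correctness bound) from the selector step, and your remarks on ties and on batch-dependent rules, are all points the paper treats only implicitly; you are also right that the inequality in the theorem statement is reversed relative to what the proof actually establishes.
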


From the definition of \mbox{mFSR}, there is$$ \mbox{mFSR}(t) = P(Y\neq\hat{Y}|S(x)\geq t ).$$ Traditionally, we still use $Q(t)$ to denote $\mbox{mFSR}(t)$. We first prove that the $Q(t)$ is monotonically decreasing in the oracle situation. 
\begin{equation} \label{eq1}
	\begin{split}
		Q(t)&=P(Y\neq \hat{Y}|S(x)\geq t)    \\
		&=\frac{E(\sum_{j\in D^{test}}((1-S_{j})\mathbb{I}(S_{j} \geq t)))}{E(\sum_{j\in D^{test}}\mathbb{I}(S_{j}\geq t))},
	\end{split}
\end{equation}where $ S_{j} = S(X_{j}) = \max\{ S^{1}(X_{j}),  S^{2}(X_{j}) \}$.

\begin{lemma} \label{lemma1}
	Suppose $ p(x)$ is a non-negative and bounded function, and $f(x)$ is a monotonically decreasing function. Then function \eqref{eq2} decreases monotonically, and the function $Q(t)$ defined by \eqref{eq1} decreases monotonically. 
	\begin{equation}\label{eq2}
		g(t) = \frac{\int_{t}^{1}f(x)p(x)dx}{\int_{t}^{1}p(x)dx}
	\end{equation}
\end{lemma}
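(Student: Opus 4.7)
The plan is to prove the two claims in order, since the second is essentially a corollary of the first once $Q(t)$ is rewritten as a ratio of integrals. For the monotonicity of $g(t)$, the natural approach is direct differentiation. Setting $A(t) = \int_t^1 f(x)p(x)\,dx$ and $B(t) = \int_t^1 p(x)\,dx$, the fundamental theorem of calculus gives $A'(t) = -f(t)p(t)$ and $B'(t) = -p(t)$, and then the quotient rule yields
\begin{equation*}
g'(t) \;=\; \frac{A'(t)B(t) - A(t)B'(t)}{B(t)^2} \;=\; \frac{p(t)}{B(t)^2}\int_t^1 \bigl[f(x) - f(t)\bigr] p(x)\,dx.
\end{equation*}
Since $f$ is monotonically decreasing, $f(x) - f(t) \leq 0$ for all $x \geq t$, and $p(x) \geq 0$ by hypothesis, so the integrand is pointwise nonpositive. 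Together with $p(t) \geq 0$ and $B(t)^2 > 0$ (the degenerate case $\int_t^1 p = 0$ can be dismissed as it leaves $g$ undefined), this shows $g'(t) \leq 0$, hence $g$ is nonincreasing.

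For $Q(t)$, the goal is to bring equation \eqref{eq1} into the form \eqref{eq2}. Assuming the test samples are i.i.d.\ so that each $S_j$ shares a common marginal density $h$ on $[1/2,1]$ (this is implicit in the assumption \eqref{distribution_F}, since $S = \max\{S^1,S^2\}$ has a well-defined distribution induced by the mixture), the expectations over $j \in D^{test}$ each contribute a factor of $n^{test}$ that cancels between numerator and denominator, leaving
\begin{equation*}
Q(t) \;=\; \frac{\int_t^1 (1-x)\,h(x)\,dx}{\int_t^1 h(x)\,dx}.
\end{equation*}
Now I would invoke the first part with $f(x) = 1-x$, which is monotonically decreasing on $[1/2,1]$, and $p(x) = h(x)$, which is nonnegative and bounded (the boundedness is a mild regularity assumption one might want to record explicitly on the mixture densities $f^j_k$). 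This delivers the monotonicity of $Q$ immediately.

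The main obstacle is not in the differentiation itself, which is routine, but in the bookkeeping that justifies collapsing the sum-of-expectations form of \eqref{eq1} into the single-sample ratio above. If the $S_j$ are only identically distributed, one can still pull the sums outside the integrals and cancel; if they are not even identically distributed, one would need to average the individual densities, but the integrand $(1-x)-(1-t) = t-x$ is still pointwise nonpositive for $x \geq t$, so the monotonicity argument goes through with $p(x) = \tfrac{1}{n^{test}}\sum_j h_j(x)$. A secondary issue worth flagging is differentiability at the endpoint $t=1/2$ and the handling of the normalization $S^1 + S^2 = 1$, but these are boundary-type concerns that do not threaten the conclusion.
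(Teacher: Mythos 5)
Your proof is correct and takes essentially the same approach as the paper: both differentiate $g$ and reduce the sign of $g'(t)$ to that of $\int_t^1[f(x)-f(t)]p(x)\,dx$, and both collapse the sum in $Q(t)$ by passing to the averaged density $\sum_j p_j$. Your direct quotient-rule computation is a slightly leaner route than the paper's auxiliary device of introducing $P(x,t)=p(x)/\int_t^1 p$ and exploiting $h(t)\equiv 1$, but the two yield the identical expression $g'(t)=\frac{p(t)}{B(t)^2}\int_t^1 [f(x)-f(t)]\,p(x)\,dx$.
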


Due to the monotonicity of $Q(t)$, we have the corresponding $ Q^{-1}(\alpha)$ for any threshold $ \alpha$ set in advance. So the oracle rule can be written as
\begin{equation}\label{delta}
	\delta^{n+j}(\alpha) =  \underset{i}{{\arg\max} \, S^{i}_{n+j}} \cdot \mathbb{I}(S_{n+j} \geq Q^{-1}(\alpha)).
\end{equation} Where no ambiguity arises, we denote $ {\arg\max}_{i} \, S_{n+j}^{i}$ as $ Y^{\prime}_{n+j}$ and $ \delta^{n+j}(\alpha)$ as $ \hat{Y}_{n+j}$.

And it is easy to see that $$\delta(\alpha)  = 1\cdot \mathbb{I}(S^{1}\geq Q^{-1}(\alpha)) + 2\cdot \mathbb{I}(S^{2}\geq Q^{-1}(\alpha))$$ are equivalent to the expression \eqref{delta}. Since $S^{i}(x)$ satisfies the equation $ S^{1}(x) + S^{2}(x) = 1$, $Q(\alpha)$ must also be a number greater than 0.5 for $S(x) \geq 0.5$, so we don’t have to consider overlapping situations.

\begin{remark}
    Let us use another angle to understand the problem. $S^{i}(x)$ itself expresses the possibility of $x$ belonging to class $i$. If we only classify according to the large or small value of $S_{i}$, that is, ${\arg\max}_{i} \, S^{i} $, it is logically impossible to control the error rate because of the model-free assumption. So we add the screening of the value of $ S $, based on $ {\arg\max}_{i} \, S^{i}$. When the value of $S$ is large enough, we classify according to ${\arg\max_{i}} \, S^{i}$. That is the explanation of $ \delta =  {\arg\max}_{i} \, S^{i} \cdot \mathbb{I}(S \geq t) $. 
\end{remark}

\section{The data-driven procedure of \mbox{FSR} control in binary classification problem}\label{sectionfour}
\subsection{Data-driven procedure}
Just like the data-driven procedure in FASI, we use the martingale method to control the \mbox{FSR}. The difference with the oracle situation is that the score function here is no longer $ S^{i}(x) = P(y = i| x)$, but a heuristic score, and we denote it as $\hat{S}^{i}$. So the corresponding $Q(t)$ is no longer monotonic. Here we learn from the idea of the q-value in \cite{ref11}.

We again explain the notation that we will use: we denote the calibration set as $ D^{cal}$, the test set as $D^{test}$, where $ n^{cal}$ represents the number of samples in the calibration set and $ n^{test}$ represents the number of samples in the test set.

\begin{assumption}\label{assumption1}
	The groups $\{(X_{i}, Y_{i}): X_i \in D^{cal} \cup D^{test}\} $are exchangeable.
\end{assumption}

Our decision rule is $$ \delta^{j}(t) =  \underset{i}{{\arg\max} \, \hat{S}^{i}_{j}} \cdot \mathbb{I}(\hat{S}_{j} \geq t),$$where $\hat{S_{j}} = \max\{\hat{S}^{1}(X_{j}),\hat{S}^{2}(X_{j})\}$, $\hat{S}^{i}_{j}$ is the score of class $i$ for sample $j$ from the black-box model. The estimated false discovery proportion (\mbox{FSP}), as a function of $t$, is given by
\begin{equation}\label{decision}
	\hat{Q}(t) = \frac{\frac{1}{n^{cal}+1}\{\sum_{i\in D^{cal}}\mathbb{I}(\hat{S_{i}}\geq t,\hat{Y}_{i} \neq Y_{i})+1\}}{\frac{1}{n^{test}}\{\sum_{n+j \in D^{test}}\mathbb{I}(\hat{S}_{n+j}\geq t)\}\bigvee 1},
\end{equation}
where $\hat{Y}_{i}$ denote the predicted class of sample $i$, $Y_{i}$ denote the real class of sample $i$. We choose the smallest $t$ such that the estimated \mbox{FSP} is less than $\alpha$. Define
\begin{equation}\label{tau}
	 \tau = \hat{Q}^{-1}(\alpha) = \inf \{t: \hat{Q}(t)\leq \alpha\},
\end{equation}
and \begin{equation}\label{R_value}
    \hat{R}_{n+j} = \inf_{t\leq \hat{s}}\{\hat{Q}(t)\},
\end{equation} where $ \hat{s} = \hat{S}(X_{n+j})$. Therefore $$ \mathbb{I}(\hat{R}_{n+j} \leq \alpha) \Leftrightarrow \mathbb{I}(\hat{S}_{n+j}\leq \tau).$$ So decision rule can be written as: for a given $\alpha$, $$\delta^{n+j} =  \underset{i}{{\arg\max} \, \hat{S}^{i}_{n+j}} \cdot \mathbb{I}(\hat{R}_{n+j} \leq \alpha).$$ 

\renewcommand{\algorithmicrequire}{\textbf{Input:}}
\renewcommand{\algorithmicensure}{\textbf{Output:}}

\begin{algorithm}\label{algori}
\caption{\mbox{FSR} control procedure on binary classification.}
\label{alg:1}
\begin{algorithmic}[1]
\REQUIRE Existing data $D$ and its real class, test data $D^{test}$.
\ENSURE Classification result of test data.
\STATE Randomly split $D$ into $D^{train}$ and $D^{cal}$. 
\STATE Train a black-box model only on $D^{train}$ to get score function $S^{1}(x)$, $S^{2}(x)$.
\STATE Predict base scores for all observations in $D^{test}$ and $D^{cal}$.
\STATE Compute the q-value for all test data using equation \eqref{decision}.
\STATE Compute the R-value for all test data using equation \eqref{R_value}.
\STATE Threshold the R-value at a user-specified level $\alpha$, assigning an observation in $ D^{test}$ to class $\hat{Y}_{i}$ if $\hat{R}_{i}\leq \alpha$, where$$\hat{Y}_{i} =  \underset{j}{{\arg\max} \, S^{j}_{i}} \cdot\mathbb{I}(\hat{R}_{i}\leq\alpha).$$
\STATE Return an indecision result on all remaining observations where $\hat{R}_{i} \geq \alpha$.
\end{algorithmic}
\end{algorithm}

\begin{theorem}\label{data_driven}
	We denote $W^{test}$/$W^{cal}$ as the samples whose real class corresponds to a lower score in the $\{S^{1}, S^{2}\}$ in the test/calibration data. Define $$p_{test} = \frac{|W^{test}|}{n^{test}} ,\ p_{cal} = \frac{|W^{cal}|}{n^{cal}} ,\ \gamma = \mathbb{E}(\frac{p_{test}}{p_{cal}}).$$Assume there exist $x_{0} \neq x_{1}$, s.t. $F_{1}^{1}(x_{0})\neq F_{1}^{2}(x_{0}) $ and $ F_{1}^{1}(x_{1})\neq F_{1}^{2}(x_{1}) $. Then, under Assumption \ref{assumption1}, the Algorithm \ref{alg:1} with R-value formula \eqref{decision} can control \mbox{FSR} at $ \alpha $ precisely. 
\end{theorem}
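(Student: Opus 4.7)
The plan is to reduce $\mbox{FSR}\leq \alpha$ to a distributional inequality about a calibration/test ratio evaluated at the data-driven threshold $\tau$, and then to verify that inequality by a backward martingale argument exploiting Assumption~\ref{assumption1}. Let $V^{test}(t) := \sum_{j\in D^{test}}\mathbb{I}(\hat{S}_{n+j}\geq t,\ n+j\in W^{test})$, $V^{cal}(t) := \sum_{i\in D^{cal}}\mathbb{I}(\hat{S}_i\geq t,\ \hat{Y}_i\neq Y_i)$, and $N(t) := \sum_{j\in D^{test}}\mathbb{I}(\hat{S}_{n+j}\geq t)$. A test sample is a false selection exactly when $\hat{S}_{n+j}\geq \tau$ and $n+j\in W^{test}$, so $\mbox{FSP} = V^{test}(\tau)/(N(\tau)\vee 1)$. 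The defining property $\hat{Q}(\tau)\leq\alpha$ rearranges to $(V^{cal}(\tau)+1)/(n^{cal}+1)\leq \alpha(N(\tau)\vee 1)/n^{test}$, and combining the two displays gives
\[
\mbox{FSP}\;\leq\;\alpha\cdot\frac{n^{cal}+1}{n^{test}}\cdot\frac{V^{test}(\tau)}{V^{cal}(\tau)+1},
\]
so it suffices to show $\mathbb{E}[V^{test}(\tau)/(V^{cal}(\tau)+1)]\leq n^{test}/(n^{cal}+1)$.

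For the reduced inequality I would set up a reverse-time filtration $\{\mathcal{G}_t\}_{t\downarrow 0}$ in which $\mathcal{G}_t$ contains the identities, scores, and labels of every sample with $\hat{S}_i\geq t$, together with the unordered multiset of the remaining pairs in $D^{cal}\cup D^{test}$. Under Assumption~\ref{assumption1}, conditional on this full multiset the cal/test assignment is a uniformly random partition into blocks of size $n^{cal}$ and $n^{test}$, so as $t$ decreases the cal/test flags of freshly revealed W-samples form a sampling-without-replacement sequence. I would then construct a process $M(t)$, of the form $c_t\, V^{test}(t)/(V^{cal}(t)+1)$ for a predictable rescaling $c_t$ that tracks the remaining cal/test quotas, and verify a one-step supermartingale inequality $\mathbb{E}[M(t-)\mid\mathcal{G}_t]\leq M(t)$ at each W-revelation. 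Because $\hat{Q}(s)$ for $s\geq t$ is $\mathcal{G}_t$-measurable, $\tau$ is a reverse-time stopping time, so optional stopping yields $\mathbb{E}[M(\tau)]\leq \mathbb{E}[M(0)]$.

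The initial value is controlled by an exact hypergeometric identity. At $t=0$, $V^{test}(0) = |W^{test}|$ and $V^{cal}(0)=|W^{cal}|$, and conditioning on $|W|=|W^{cal}|+|W^{test}|$ a short calculation based on $\mathbb{E}[1/(|W^{cal}|+1)]=(n+1)/[(n^{cal}+1)(|W|+1)]$ (Vandermonde) gives $\mathbb{E}[|W^{test}|/(|W^{cal}|+1)] = n^{test}/(n^{cal}+1)$ on the nose, independently of $|W|$. This exact equality is what lets the theorem claim control ``precisely'' at $\alpha$ rather than with slack. The non-degeneracy hypothesis on $F_1^1,F_1^2$ at two distinct points is used to ensure that $\{t:\hat{Q}(t)\leq\alpha\}$ is non-empty so that $\tau$ is actually attained and the backward walk is not trivial.

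The main obstacle will be the supermartingale step. The naive choice $V^{test}(t)/(V^{cal}(t)+1)$ is not itself a supermartingale: a one-step drift computation at a W-revelation shows the expected change has the sign of $p(V^{cal}+2)-(1-p)V^{test}$, where $p$ is the conditional probability the revealed sample lies in the test pool, and this is strictly positive whenever $V^{test}$ is small and many test quota remain. The delicate work is to choose a predictable weight $c_t$ (or an equivalent additive correction) that uses the remaining cal/test quotas to cancel the quota-dependence of the drift and turn $M(t)$ into a genuine supermartingale; once this is in place, optional stopping combined with the hypergeometric initial-value calculation closes the argument.
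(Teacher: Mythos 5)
Your opening reduction (bounding $\mbox{FSP}$ by $\alpha\cdot\frac{n^{cal}+1}{n^{test}}\cdot\frac{V^{test}(\tau)}{V^{cal}(\tau)+1}$) coincides with the paper's first step, but the heart of the proof is missing, and the obstacle you flag is an artifact of the filtration you chose. Because your $\mathcal{G}_t$ reveals the cal/test identity of every sample above the threshold, the one-step drift of $V^{test}/(V^{cal}+1)$ under forward revelation is indeed not signed, and you leave the construction of the compensating weight $c_t$ as an acknowledged open problem --- that is precisely the step that proves the theorem, and it is not carried out. The paper (following FASI) sidesteps it by running time in the opposite direction with a coarser filtration: index time by $k$, the number of misclassified ($W$-type) samples above the current threshold $s_k$, and let $F_k$ record only the count pairs $(V^{cal}(s_j),V^{test}(s_j))$ for $j\geq k$, so that the cal/test labels of the top $k$ are known only in aggregate. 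Under Assumption \ref{assumption1} these labels are exchangeable given $F_k$, hence the lowest-ranked of the top $k$ is a test point with probability $V^{test}(s_k)/\{V^{test}(s_k)+V^{cal}(s_k)\}$, and a one-line computation gives $\mathbb{E}\{V^{test}(s_{k-1})/(V^{cal}(s_{k-1})+1)\mid F_k\}=V^{test}(s_k)/(V^{cal}(s_k)+1)$ exactly: the unmodified ratio is a martingale, no predictable rescaling is needed, $\tau$ is a stopping time for this filtration, and optional stopping yields $\mathbb{E}\{V^{test}(\tau)/(V^{cal}(\tau)+1)\mid D^{cal},D^{test}\}=|W^{test}|/(|W^{cal}|+1)$. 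Note also that your optional-stopping step points the wrong way for your own setup: since $\mathcal{G}_t$ grows as $t\downarrow 0$, the fully revealed state $t=0$ is the terminal value, so a forward supermartingale bound compares $\mathbb{E}[M(\tau)]$ with the value at large $t$ (where the ratio is $0$), not with your hypergeometric quantity.

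Two further points. Your ``exact hypergeometric identity'' is false: conditional on $|W|=w$, exchangeability makes $|W^{cal}|$ hypergeometric and $\mathbb{E}\{|W^{test}|/(|W^{cal}|+1)\}=\frac{n^{test}}{n^{cal}+1}\bigl(1-\binom{n-w}{n^{cal}+1}/\binom{n}{n^{cal}+1}\bigr)$, which equals $n^{test}/(n^{cal}+1)$ only when $w\geq n^{test}$; in general only the inequality $\leq$ holds. That inequality is all one needs for $\mbox{FSR}\leq\alpha$ (and, combined with the backward martingale above, it gives a cleaner finish than the paper's factorization through $\gamma=\mathbb{E}(p_{test}/p_{cal})$), but it cannot support your claim that exactness is what makes the control ``precise.'' Finally, the hypothesis on $F_1^1,F_1^2$ is not used to guarantee that $\{t:\hat{Q}(t)\leq\alpha\}$ is nonempty (the $\vee 1$ and the $+1$ already keep $\hat{Q}$ finite); the paper invokes it to deduce $\pi_{test}=\pi_{cal}$ from the equality of the mixture distributions, whence $p_{test}=p_{cal}$ and $\gamma=1$ in its final bound.
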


\subsection{Control \mbox{FSR} under different sparsity}
To maximize the method power, we need to estimate $p_{test}$ when data are not exchangeable. From the assumption on distribution \eqref{distribution_F}, we know:\begin{equation}
	\begin{split}
		&S^{1}_{test}(x) \sim F^{1}_{test}(x) = \pi_{test}\cdot F^{1}_{1} + (1-\pi_{test})\cdot F^{1}_{2}, \\ 
        & S^{2}_{test}(x) \sim F^{2}_{test}(x) = \pi_{test}\cdot F^{2}_{1} + (1-\pi_{test})\cdot F^{2}_{2}.
	\end{split}
\end{equation}
We are considering a binary classification problem. When treating $n^{test}$ as a fixed constant, the equation can be simplified to:
\begin{equation}\label{p_test}
	\begin{split}
		\mathbb{E}(p_{test}) &= \mathbb{E}( |W^{test}|/n^{test}) \\
		&= \frac{\mathbb{E} [\sum_{i\in D^{test}} \{ \mathbb{I}(S^{1}(x_{i})\geq 0.5, c = 2) + \mathbb{I}(S^{1}(x_{i})< 0.5, c = 1) = 1\}]}{n^{test}}.
	\end{split}
\end{equation}
According to Chebyshev's law of large numbers, the above formula converges to the following formula with probability 1. \begin{equation}
	\begin{split}
		\mathbb{E}(p_{test}) &= \mathbb{P}(S^{1}(x_{i})\geq 0.5, c = 2) + \mathbb{P}(S^{1}(x_{i})< 0.5, c = 1) \\
		&= (1-\pi_{test})\cdot(1 - F^{1}_{2}(0.5)) + \pi_{test} \cdot F^{1}_{1}(0.5).
	\end{split}
\end{equation}
So we only need to estimate $\pi_{test} $ and $ F_{1}^{1}$. And $ F_{1}^{1}$ can be estimated from calibration data, $\pi_{test}$ can be estimated by Storey's estimator \cite{ref11} used on test data, that is: we define $$W^{test}(\lambda) = \# \{i \in D^{test}: S^{1}_{i} > \lambda\},$$ and from the empirical CDF the expected number of $S^{1}_{i}$ should be $n^{test}\cdot \pi_{test} \cdot (1- F_{1}^{1}(\lambda))$. Setting expected equal to observed, we obtain: \begin{equation}\label{pi_test_storey}
    \hat{\pi}_{test}(\lambda) = \frac{W^{test}(\lambda)}{n^{test}(1-\hat{F}_{1}^{1}(\lambda))},
\end{equation} and $\mathbb{E} \{\hat{\pi}(\lambda)\}\geq \pi $. We plug the estimation of $\pi_{test}$ into equation \eqref{p_test}, so as to get the estimation of $\mathbb{E}(p_{test})$:
\begin{equation}\label{p_test_estima}
    \begin{split}
        \hat{p}_{test} &= (1-\hat{\pi}_{test}(\lambda))\cdot(1 - \hat{F}^{1}_{2}(0.5)) + \hat{\pi}_{test}(\lambda) \cdot \hat{F}^{1}_{1}(0.5) \\
        &= 1 -  \hat{F}^{1}_{2}(0.5) + \hat{\pi}_{test}(\lambda)\cdot( \hat{F}^{1}_{2}(0.5) - 1 + \hat{F}^{1}_{1}(0.5)),
    \end{split}
\end{equation}where the $\hat{F}^{i}_{j}(0.5)$ is calculated by calibration data, and  we take $\lambda$ as $\tau$, calculated by \eqref{tau}.

\begin{assumption}\label{assumption3}
    $\pi_{test} \neq \pi_{cal}$ but the conditional distribution of the same class is consistent in test data and calibration data.
\end{assumption}

Here, we consider the case where the proportions of class 1 and class 2 in test and calibration data are different. So the $\pi_{test}$ is estimated using Storey's estimator on test data, and the $\pi_{cal}$ can be estimated directly using the empirical distribution of calibration data.

\begin{lemma}\label{lowerbound}
    In the procedure of estimating $\pi_{test}$, we use Storey's estimator for $ S^{1}$ when $$ F^{1}_{2}(0.5) + F^{1}_{1}(0.5) -1 \geq 0.$$ That is:  $$W^{test}(\lambda) = \# \{i \in D^{test}: S^{1}_{i} > \lambda\}, \ \hat{\pi}_{test}(\lambda) = \frac{W^{test}(\lambda)}{n^{test}(1-\hat{F}_{1}^{1}(\lambda))}.$$ Otherwise, we use Storey's estimator for $ S^{2}$. Under the steps above, we  have $\mathbb{E}(p_{test}) \leq \mathbb{E}\{\hat{p}_{test}\}$ for any $\lambda$.
\end{lemma}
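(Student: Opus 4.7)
The plan is to exploit the fact that $\hat{p}_{test}$ is an affine function of $\hat{\pi}_{test}(\lambda)$ and that Storey's estimator is conservatively biased upward. Reading \eqref{p_test_estima} I would first record the decomposition
$$\hat{p}_{test} = (1-\hat{F}^{1}_{2}(0.5)) + \hat{\pi}_{test}(\lambda)\bigl(\hat{F}^{1}_{2}(0.5) + \hat{F}^{1}_{1}(0.5) - 1\bigr),$$
together with its population counterpart
$$\mathbb{E}(p_{test}) = (1-F^{1}_{2}(0.5)) + \pi_{test}\bigl(F^{1}_{2}(0.5) + F^{1}_{1}(0.5) - 1\bigr).$$
Since the calibration-based empirical CDFs used to form $\hat{F}^{i}_{j}(0.5)$ are unbiased for $F^{i}_{j}(0.5)$, the only nontrivial comparison is between $\mathbb{E}\bigl[\hat{\pi}_{test}(\lambda)(\hat{F}^{1}_{2}(0.5) + \hat{F}^{1}_{1}(0.5) - 1)\bigr]$ and $\pi_{test}(F^{1}_{2}(0.5) + F^{1}_{1}(0.5) - 1)$.

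Second, under the hypothesis $F^{1}_{2}(0.5) + F^{1}_{1}(0.5) - 1 \geq 0$ I would condition on the calibration data $D^{cal}$. Conditionally, $W^{test}(\lambda)$ is a sum of independent indicators drawn from the test distribution and is independent of the calibration-based denominator $1-\hat{F}^{1}_{1}(\lambda)$, so a short computation combined with the unbiasedness $\mathbb{E}[\hat{F}^{1}_{1}(\lambda)] = F^{1}_{1}(\lambda)$ yields the Storey-type bound $\mathbb{E}[\hat{\pi}_{test}(\lambda)] \geq \pi_{test}$ already noted below \eqref{pi_test_storey}. Multiplying this lower bound by the non-negative slope factor $F^{1}_{2}(0.5) + F^{1}_{1}(0.5) - 1$ and comparing term-by-term then closes the inequality $\mathbb{E}[\hat{p}_{test}] \geq \mathbb{E}(p_{test})$.

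Third, when $F^{1}_{2}(0.5) + F^{1}_{1}(0.5) - 1 < 0$ the slope of $\hat{p}_{test}$ in $\hat{\pi}_{test}(\lambda)$ is negative, so an upward-biased estimator of $\pi_{test}$ would instead be anti-conservative. This is exactly why the lemma prescribes switching to Storey's estimator built from $S^{2}$: using $S^{1}+S^{2}=1$, the roles of the two classes and of $(\pi_{test}, 1-\pi_{test})$ swap, the analogous affine decomposition of $\hat{p}_{test}$ acquires a non-negative slope, and the argument of the previous paragraph applies verbatim.

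The main obstacle I expect is the dependence between $\hat{\pi}_{test}(\lambda)$ and the slope factor $\hat{F}^{1}_{2}(0.5) + \hat{F}^{1}_{1}(0.5) - 1$, since both involve calibration-based quantities. My plan for handling this is a conditioning-on-$D^{cal}$ argument together with a careful check that Storey's conservativeness propagates through the product after applying the tower property. The bookkeeping between the $S^{1}$-based and $S^{2}$-based Storey estimators must be done carefully so that the slope in the relevant affine decomposition is always of the correct sign in expectation; the dichotomy stated in the lemma is precisely what guarantees this.
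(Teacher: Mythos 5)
Your plan is essentially the paper's proof: the same affine decomposition of $\hat{p}_{test}$ in $\hat{\pi}_{test}(\lambda)$, the same appeal to the Storey conservativeness fact $\mathbb{E}\{\hat{\pi}_{test}(\lambda)\}\geq\pi_{test}$ stated below \eqref{pi_test_storey}, and the same dichotomy on the sign of the slope $F^1_2(0.5)+F^1_1(0.5)-1$ to decide whether to build the estimator from $S^1$ or $S^2$. The only cosmetic difference is the direction of the tower decomposition — you condition on $D^{cal}$ (so the slope factor becomes deterministic), whereas the paper writes $\mathbb{E}_{D^{test}}\{\mathbb{E}_{D^{cal}}\{\cdot\}\}$ and replaces the $\hat{F}$'s by their population values inside the inner expectation while holding $\hat{\pi}_{test}$ fixed; both routes must (and both do, implicitly) pass over the mild dependence of $\hat{\pi}_{test}(\lambda)$ on $D^{cal}$ through the denominator $1-\hat{F}^1_1(\lambda)$, which you correctly flag as the delicate point.
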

According to the Lemma \ref{lowerbound}, without loss of generality, we can assume that $F^{1}_{2}(0.5) + F^{1}_{1}(0.5) -1 \geq 0 $, and Storey's estimator is used for $S^1$. Therefore we define:
\begin{equation}\label{tilde_Q}
    \begin{split}
        \tilde Q(t) &= \frac{\hat{p}_{test}}{\hat{p}_{cal}} \cdot \hat{Q}(t) \\
        &= \frac{\hat{p}_{test}}{\hat{p}_{cal}} \cdot \frac{\frac{1}{n^{cal}+1} \{\sum_{i\in D^{cal}}\mathbb{I}(\hat{S_{i}}\geq t,\hat{Y}_{i} \neq Y_{i})+1\}}{\frac{1}{n^{test}}\{\sum_{n+j \in D^{test}}\mathbb{I}(\hat{S}_{n+j}\geq t)\}\bigvee 1}.
    \end{split}
\end{equation} From the proof above, We choose the smallest $t$ such that the estimated \mbox{FSP} is less than $\alpha$. Define
\begin{equation}\label{tau_}
	 \tilde \tau =  \tilde Q^{-1}(\alpha) = \inf\{t: \tilde Q(t)\leq \alpha\},
\end{equation}
and \begin{equation}\label{R_value_}
    \tilde R_{n+j} = \inf_{t\leq \hat{s}}\{\tilde Q(t)\},
\end{equation} where $ \hat{s} = \hat{S}(X_{n+j} = x)$. So, the new decision rule can be written as: for a given $\alpha$, 
$$\tilde \delta^{n+j} =  \underset{i}{{\arg\max} \, S^{i}_{n+j}} \cdot \mathbb{I}(\tilde R_{n+j} \leq \alpha).$$

\renewcommand{\algorithmicrequire}{\textbf{Input:}}
\renewcommand{\algorithmicensure}{\textbf{Output:}}

\begin{algorithm}
\caption{\mbox{FSR} control procedure with different $\pi$.}
\label{alg:2}
\begin{algorithmic}[1]
\REQUIRE Existing data $D$ and its real class, test data $D^{test}$.
\ENSURE Classification result of test data $\tilde Y_{i}$.
\STATE Randomly split $D$ into $D^{train}$ and $D^{cal}$. 
\STATE Train a black-box model only on $D^{train}$ to get score function $S^{1}(x)$, $S^{2}(x)$.
\STATE Predict base scores for all observations in $D^{test}$ and $D^{cal}$.
\STATE Compute the q-value, using equation \eqref{decision} for $D^{test}$, and calculate $\tau$ by \eqref{tau}.
\STATE Calculate $\hat{\pi}_{test}$ by formula \eqref{pi_test_storey} and take $\lambda$ as $\tau$, calculate $\hat{p}_{cal}$ by its empirical distribution$$\hat{p}_{cal} =  \frac{\{\sum_{i\in D^{test}} [ \mathbb{I}(S^{1}(x_{i})\geq 0.5, c = 2) + \mathbb{I}(S^{1}(x_{i})< 0.5, c = 1) = 1]\}}{n^{test}}.$$
\STATE Estimate  $\hat{F}^{1}_{2}(0.5)$, $\hat{F}^{1}_{2}(0.5)$ and $ \hat{F}^{1}_{1}(0.5))$ by calibration data and plug in formula \eqref{p_test_estima} to get $\hat{p}_{test}$. 
\STATE Compute the new q-value $\tilde Q_{i}$ for all test data by \eqref{tilde_Q} and $\tilde\tau$ by \eqref{tau_}.
\STATE Compute the new R-value $ \tilde R_{i}$ by \eqref{R_value_} for all test data based on $\tilde Q_{i}$.
\STATE Threshold the new R-value at a user-specified level $\alpha$, assigning an observation in $ D^{test}$ to class $\tilde Y_{i}$ if $\tilde R_{i}\leq \alpha$, where $$\tilde Y_{i} =  \underset{j}{{\arg\max} \, S^{j}_{i}} \cdot\mathbb{I}(\tilde R_{i}\leq\alpha).  $$
\STATE Return indecision on all remaining observations where $\tilde R_{i} \geq \alpha$.
\end{algorithmic}
\end{algorithm}
\mbox{FSP} of the proposed algorithm is given by \begin{equation}\label{marting}
    \mbox{FSP}(\tilde \tau) = \frac{V^{test}(\tilde \tau)}{R^{test}(\tilde \tau)\bigvee 1}.
\end{equation} And we have the following theorem.

\begin{theorem}\label{datadrivenpidifferent}
Denote $$ C = \frac{\mathbb{E}(p_{cal})}{\mathbb{E}(p_{test})} \cdot( \frac{\pi_{test}}{\pi_{cal}} + \frac{1 - \pi_{test}}{1-\pi_{cal}}). $$ Then under Assumption \ref{assumption3} that $\pi_{test}$ is different from $\pi_{cal}$ and the conditional distribution of the same class is consistent in test data and calibration data. The Algorithm \ref{alg:2} with R-value \eqref{R_value_} can control \mbox{FSR} at $C\alpha$, where$$ \mbox{FSR} = \mathbb{E} \left\{\frac{\sum_{i\in D^{test}}(\tilde Y_{i} \neq Y_{i},\tilde Y_{i} \neq 0)}{\sum_{i\in D^{test}}(\tilde Y_{i} \neq 0)\bigvee 1}\right\}.$$
\end{theorem}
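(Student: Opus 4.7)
The plan is to bound \mbox{FSR} by $C\alpha$ in three stages: (i) an algebraic decomposition that lets us plug in the stopping-rule inequality $\tilde Q(\tilde\tau)\leq\alpha$; (ii) a stratified reverse-supermartingale argument that absorbs the class imbalance between $D^{test}$ and $D^{cal}$; and (iii) the conservative estimation bound from Lemma~\ref{lowerbound} to control the ratio $\hat p_{cal}/\hat p_{test}$.

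For step (i), using the definition of $\hat Q$ in \eqref{decision} and the relation $\tilde Q=(\hat p_{test}/\hat p_{cal})\,\hat Q$ from \eqref{tilde_Q}, and writing $V^{cal}(t)=\sum_{i\in D^{cal}}\mathbb{I}(\hat S_i\geq t,\hat Y_i\neq Y_i)$, we can write
\begin{equation*}
\mbox{FSP}(\tilde\tau)=\frac{V^{test}(\tilde\tau)}{V^{cal}(\tilde\tau)+1}\cdot\hat Q(\tilde\tau)\cdot\frac{n^{cal}+1}{n^{test}} \leq \alpha\cdot\frac{V^{test}(\tilde\tau)}{V^{cal}(\tilde\tau)+1}\cdot\frac{\hat p_{cal}}{\hat p_{test}}\cdot\frac{n^{cal}+1}{n^{test}},
\end{equation*}
where the inequality uses $\tilde Q(\tilde\tau)\leq\alpha$. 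It then suffices to bound the expectation of the right-hand factor (without $\alpha$) by $C$.

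For step (ii), I would exploit that Assumption~\ref{assumption3} only breaks exchangeability across classes: conditional on the true labels, the scores of class-$k$ samples in $D^{test}\cup D^{cal}$ remain exchangeable. Writing $V^{test,k}(t)$ and $V^{cal,k}(t)$ for the false-selection counts restricted to class-$k$ samples, the elementary inequality
\begin{equation*}
\frac{V^{test,1}+V^{test,2}}{V^{cal,1}+V^{cal,2}+1}\leq\frac{V^{test,1}}{V^{cal,1}+1}+\frac{V^{test,2}}{V^{cal,2}+1}
\end{equation*}
(which follows from $c+d+1\geq c+1$ and $c+d+1\geq d+1$) lets me treat each class separately. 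Within class $k$, I would replay the reverse-supermartingale argument of Theorem~\ref{data_driven} in the backward filtration indexed by scores exceeding $t$: conditional on $(n^{test,k},n^{cal,k})$, the ratio $V^{test,k}(t)/(V^{cal,k}(t)+1)$ is a supermartingale and $\tilde\tau$ is a stopping time, so optional stopping yields the per-class bound $n^{test,k}/(n^{cal,k}+1)$. Multiplying by $(n^{cal}+1)/n^{test}$ and taking outer expectation over the binomial class sizes produces the factor $\pi_{test}/\pi_{cal}+(1-\pi_{test})/(1-\pi_{cal})$.

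Finally, for step (iii), Lemma~\ref{lowerbound} guarantees $\mathbb{E}(\hat p_{test})\geq\mathbb{E}(p_{test})$, while $\hat p_{cal}$ is an essentially unbiased empirical estimator of $\mathbb{E}(p_{cal})$; together these give $\mathbb{E}[\hat p_{cal}/\hat p_{test}]\leq\mathbb{E}(p_{cal})/\mathbb{E}(p_{test})$, supplying the remaining factor of $C$. The main obstacle will be making the per-class supermartingale/optional-stopping step rigorous: $\tilde\tau$ depends on \emph{all} scores and labels (including the Storey estimator $\hat\pi_{test}$, which itself depends on $\tau$), not only on class-$k$ quantities, so one must either enlarge the filtration carefully to show that the class-$k$ supermartingale property is preserved, or decouple $\hat p_{test}$ from $\tilde\tau$ via a deterministic grid in $\lambda$ followed by a union bound.
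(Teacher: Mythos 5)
Your proposal reproduces the paper's own proof essentially step for step: (a) rewrite $\mbox{FSP}(\tilde\tau)$ through $\tilde Q(\tilde\tau)\leq\alpha$ to isolate $(\hat p_{cal}/\hat p_{test})\cdot V^{test}/(V^{cal}+1)$; (b) split $V^{test}/(V^{cal}+1)$ by true class via the elementary inequality $\frac{a+b}{c+d+1}\leq\frac{a}{c+1}+\frac{b}{d+1}$, which the paper states just before the proof; (c) invoke the per-class martingale of Lemma~\ref{piestimate} and optional stopping to pass back to $t_l$; (d) convert the resulting ratios into $\pi_{test}/\pi_{cal}+(1-\pi_{test})/(1-\pi_{cal})$ under Assumption~\ref{assumption3}; (e) absorb $\hat p_{cal}/\hat p_{test}$ into $\mathbb{E}(p_{cal})/\mathbb{E}(p_{test})$ via Lemma~\ref{lowerbound}. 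So the route is the same as the paper's.

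Two caveats. First, a notational slip: optional stopping on the class-$k$ mirror process returns $W^{test}_k/(W^{cal}_k+1)$, the counts of class-$k$ misclassifications under the unthresholded rule, not $n^{test,k}/(n^{cal,k}+1)$; the class proportions only appear after taking expectations and cancelling the shared conditional misclassification rate that Assumption~\ref{assumption3} provides (which is exactly why that assumption is needed). Second, your closing assertion that $\mathbb{E}(\hat p_{test})\geq\mathbb{E}(p_{test})$ together with near-unbiasedness of $\hat p_{cal}$ yields $\mathbb{E}[\hat p_{cal}/\hat p_{test}]\leq\mathbb{E}(p_{cal})/\mathbb{E}(p_{test})$ does not follow as stated (Jensen for $1/y$ points the other way, and the numerator and denominator are dependent); the paper performs the same unjustified factorization in its final two lines, so this is a shared gap rather than a misreading of the argument. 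Your concluding worry about $\tilde\tau$ depending on the Storey estimator $\hat\pi_{test}(\tau)$ and on scores across classes is well placed: the paper invokes the per-class martingale and stopping time without enlarging the filtration to accommodate $\hat p_{test}$, so the rigor issue you flag is present in the source as well.
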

We denote \begin{equation}
    \begin{split}
         V^{test}(\tilde \tau) &= \# \{i: S(x_{i}) \geq \tilde \tau, Y_i \neq \hat{Y}_{i}, x_{i} \in D^{test} \}, \\
         V^{cal}(\tilde \tau) &= \# \{i: S(x_{i}) \geq \tilde \tau, Y_i \neq \hat{Y}_{i}, x_{i} \in D^{cal} \}, \\
         R^{test}(\tilde \tau) &= \# \{i: S(x_{i}) \geq \tilde \tau,  x_{i} \in D^{test} \}, \\
         R^{cal}(\tilde \tau) &= \# \{i: S(x_{i}) \geq \tilde \tau,  x_{i} \in D^{cal} \}. \\
    \end{split}
\end{equation}To illustrate the theorem, we expand the expression of $\mbox{FSP}(\tilde \tau)$ above
\begin{equation}
    \begin{split}
		\mbox{FSP}(\tilde \tau) &= \frac{V^{test}(\tilde \tau)}{V^{cal}(\tilde \tau)+1}\cdot \frac{V^{cal}(\tilde \tau)+1}{R^{test}(\tilde \tau)\bigvee 1} \\
		=& \tilde Q(\tilde \tau)\cdot \frac{n^{cal}+1}{n^{test}} \cdot \frac{\hat{p}_{cal}}{\hat{p}_{test}} \cdot\frac{V^{test}(\tilde \tau)}{V^{cal}(\tilde \tau)+1} \\
        \leq & \alpha \cdot \frac{n^{cal} + 1}{ n^{test}} \cdot \frac{c_0}{c_1 + c_2 \cdot \hat{\pi}_{test}} \cdot \frac{V^{test}(\tilde \tau)}{V^{cal}(\tilde \tau) +1},
    \end{split}
\end{equation}
where $ c_{0} = | W^{cal}|/ | D^{cal}| ,\ c_{1} = 1 - \hat{F}^{1}_{2}(0.5) ,\  c_{2} = \hat{F}_{2}^{1}(0.5) - 1 + \hat{F}^{1}_{1}(0.5)$. We prove the following lemma:
\begin{lemma}\label{exchangeable}
    Let $(\Omega, \mathcal{F}, \mathbb{P})$ be a triple, and $X$ is a random variable with $\mathrm{E}(|X|)<\infty$. Let $\mathcal{G}$ and $\mathcal{H}$ be a sub-$\sigma$-algebra of $\mathcal{F}$.
    
    i. If $\mathcal{H}$ is independent of $\sigma(\sigma(X), \mathcal{G})$ then $
    \mathbb{E}[X|\sigma(\mathcal{G}, \mathcal{H})]=\mathbb{E}(X|\mathcal{G}),  \text { a.s. }$

    ii.Let $ \{s_{1},s_2,...,s_{n}\}$ be a series random variables which satisfy $a \leq s_{1}\leq s_2\leq ...\leq s_n\leq b$ and exchangeability. X is defined by a subsequence of $\{s_{k}\}_{1\leq k\leq n}$, and independent of 
 b. If $\mathcal{H} = \sigma(s_{i})_{k+1 \leq i\leq n}$, $\mathcal{G} = \sigma(s_{i})_{k\leq i\leq n}$, where $s_{k}$ is independent of $X$, then $$\mathbb{E}[X|\mathcal{G}]=\mathbb{E}(X|\mathcal{H}), \quad \text { a.s. }$$
    \end{lemma}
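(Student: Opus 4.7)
The plan is to prove part (i) directly from the definition of conditional expectation and then derive part (ii) as a corollary of (i) with a specific choice of sub-$\sigma$-algebras.

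For part (i), I would check that $\mathbb{E}(X\mid\mathcal{G})$ satisfies the two characterizing properties of $\mathbb{E}[X\mid\sigma(\mathcal{G},\mathcal{H})]$. Measurability is immediate since $\mathcal{G}\subset\sigma(\mathcal{G},\mathcal{H})$. For the integration property, I would reduce to the $\pi$-system of rectangles $A=G\cap H$ with $G\in\mathcal{G}$ and $H\in\mathcal{H}$. On such a rectangle, the stronger independence of $\mathcal{H}$ from $\sigma(\sigma(X),\mathcal{G})$ gives $\int_{A}X\,dP=\mathbb{P}(H)\int_{G}X\,dP$, while the weaker independence of $\mathcal{H}$ from $\mathcal{G}$ gives $\int_{A}\mathbb{E}(X\mid\mathcal{G})\,dP=\mathbb{P}(H)\int_{G}\mathbb{E}(X\mid\mathcal{G})\,dP$; by the defining property of $\mathbb{E}(X\mid\mathcal{G})$, the two rectangle integrals agree. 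A standard Dynkin $\pi$--$\lambda$ argument then extends the identity from rectangles to all of $\sigma(\mathcal{G},\mathcal{H})$, which completes (i).

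For part (ii), I would invoke (i) with $\mathcal{G}^{\star}=\mathcal{H}=\sigma(s_{k+1},\ldots,s_{n})$ and $\mathcal{H}^{\star}=\sigma(s_{k})$, so that $\sigma(\mathcal{G}^{\star},\mathcal{H}^{\star})=\mathcal{G}=\sigma(s_{k},s_{k+1},\ldots,s_{n})$. The conclusion $\mathbb{E}[X\mid\mathcal{G}]=\mathbb{E}[X\mid\mathcal{H}]$ then follows once one verifies the independence hypothesis of (i), namely that $\sigma(s_{k})$ is independent of $\sigma(\sigma(X),\mathcal{G}^{\star})=\sigma(X,s_{k+1},\ldots,s_{n})$. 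The exchangeability hypothesis enters at precisely this point: because $X$ is a statistic of a subsequence that does not involve index $k$, permuting $s_{k}$ with any index used to build $X$ leaves the joint law invariant, and together with the assumed marginal independence $s_{k}\perp X$ this should upgrade to the joint independence $s_{k}\perp(X,s_{k+1},\ldots,s_{n})$.

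The main obstacle I anticipate is exactly this upgrade from scalar to joint independence. Exchangeable sequences are not in general jointly independent (they can have arbitrary positive correlations), so pairwise independence of $s_{k}$ and $X$ does not by itself imply independence of $s_{k}$ from the full vector $(X,s_{k+1},\ldots,s_{n})$. To close the argument, I would exploit the structural context of the paper, where the underlying scores on $D^{cal}\cup D^{test}$ are generated i.i.d.\ from the mixture \eqref{distribution_F} (conditional on labels), so that exchangeability reduces to an i.i.d.\ structure in which $s_{k}$ is genuinely independent of the coordinates indexing $X$ and of $(s_{k+1},\ldots,s_{n})$ jointly. Once that joint independence is in hand, part (i) applies verbatim and (ii) follows.
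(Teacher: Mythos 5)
Your argument for part (i) is the standard $\pi$–$\lambda$ verification and coincides with the proof the paper points to in Williams, so that part is fine. For part (ii), however, the reduction you propose cannot be made to work, and it is also not what the paper does. You want to apply (i) with $\mathcal{G}^{\star}=\mathcal{H}=\sigma(s_{k+1},\dots,s_n)$ and $\mathcal{H}^{\star}=\sigma(s_k)$, which requires $\sigma(s_k)$ to be independent of $\sigma(X,s_{k+1},\dots,s_n)$. But the lemma's hypothesis imposes the ordering $s_k\le s_{k+1}\le\cdots\le s_n$ almost surely; except in degenerate cases this forces $s_k$ and $s_{k+1}$ to be dependent, so the independence you need is contradicted by the hypotheses rather than merely left unverified. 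The obstacle is therefore not only the pairwise-to-joint upgrade you flag near the end (which is itself a real issue), but the stronger fact that $s_k\perp(s_{k+1},\dots,s_n)$ is false for ordered variables. Importing the i.i.d.\ structure of the underlying scores does not rescue this, because after passing to the ordered $s_1\le\cdots\le s_n$ the coordinates are no longer independent of one another.

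The paper's proof of (ii) takes an entirely different route that avoids this wall. It first uses exchangeability of the underlying sample to relabel so that the one coordinate distinguishing $\mathcal{G}$ from $\mathcal{H}$ is the last order statistic $s_n$ rather than $s_k$, and then argues that, beyond the a.s.-satisfied ordering event $\{s_{n-1}\le s_n\}$, conditioning additionally on $s_n$ contributes no information about $X$ once $\sigma(s_k,\dots,s_{n-1})$ is given, after which the ordering event can be discarded. In other words, the paper works with a conditional-independence observation about the top order statistic rather than an unconditional independence of one order statistic from all the later ones. If you want to salvage your approach, you would need to recast the independence hypothesis of (i) in a conditional form (or pass back to the unordered underlying sample before invoking (i)); as written, the hypothesis of (i) simply does not hold for the sub-$\sigma$-algebras you chose.
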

    
    \begin{lemma}\label{chengji}
    if $(X_{n},\sigma(s_{j})_{j\leq n}, n\geq 0)$ and  $(Y_{n},\sigma(s_{j})_{j\leq n}, n\geq 0)$ are martingales. $ |Y_{n}|$ and $| X_{n} |$ has a finite upper bound $M$, then $(X_{n}\cdot Y_{n}, \sigma(s_{j})_{j\leq n}, n\geq 0)$ is also a martingale.
\end{lemma}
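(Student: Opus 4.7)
The plan is to verify the three defining properties of a martingale for the process $Z_n := X_n Y_n$ with respect to the filtration $\mathcal{F}_n := \sigma(s_j)_{j \leq n}$. Adaptedness is immediate because $X_n$ and $Y_n$ are both $\mathcal{F}_n$-measurable, so their product is too. Integrability follows from the uniform bound hypothesis: $|Z_n| \leq |X_n|\,|Y_n| \leq M^2$, so $\mathbb{E}|Z_n| \leq M^2 < \infty$. The substantive step is therefore the one-step martingale identity $\mathbb{E}[Z_{n+1} \mid \mathcal{F}_n] = Z_n$.

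To attack this identity, I would expand
\[ X_{n+1} Y_{n+1} = X_n Y_n + X_n(Y_{n+1} - Y_n) + Y_n(X_{n+1} - X_n) + (X_{n+1}-X_n)(Y_{n+1}-Y_n), \]
take conditional expectations termwise, and use the $\mathcal{F}_n$-measurability of $X_n$ and $Y_n$ together with the martingale property of each sequence, namely $\mathbb{E}[X_{n+1} - X_n \mid \mathcal{F}_n] = 0$ and $\mathbb{E}[Y_{n+1} - Y_n \mid \mathcal{F}_n] = 0$. The $\mathcal{F}_n$-measurability (plus boundedness, which legitimizes pulling factors out of the conditional expectation) kills the two linear increment terms, leaving
\[ \mathbb{E}[Z_{n+1} \mid \mathcal{F}_n] = Z_n + \mathbb{E}\bigl[(X_{n+1}-X_n)(Y_{n+1}-Y_n) \mid \mathcal{F}_n\bigr]. \]

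The main obstacle is exactly this cross-increment term, since the product of two martingales sharing a common filtration is not, in general, a martingale; one needs some orthogonality of the increments. To close the argument in the paper's context, I expect to need the concrete structure of the $X_n$ and $Y_n$ used in the \mbox{FSR} proof\,---\,specifically, that one process is measurable with respect to a sub-$\sigma$-algebra generated by test-data quantities while the other is measurable with respect to a sub-$\sigma$-algebra generated by calibration-data quantities. Under the exchangeability hypothesis this should let me invoke Lemma \ref{exchangeable}(i) to obtain conditional independence of the two increments given $\mathcal{F}_n$, whence the cross expectation factors into $\mathbb{E}[X_{n+1}-X_n \mid \mathcal{F}_n]\cdot \mathbb{E}[Y_{n+1}-Y_n \mid \mathcal{F}_n] = 0$. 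Substituting this back yields $\mathbb{E}[Z_{n+1} \mid \mathcal{F}_n] = Z_n$ and completes the verification. The uniform bound $M$ also plays the secondary role of justifying the interchange of measurability operations and the pulling-out of $\mathcal{F}_n$-measurable factors throughout.
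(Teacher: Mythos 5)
Your scepticism is well-placed: the lemma as stated is false, and your symmetric decomposition correctly isolates the obstruction as the cross-variation term $(X_{n+1}-X_n)(Y_{n+1}-Y_n)$, whose conditional expectation need not vanish (take $X_n=Y_n$ a non-degenerate bounded martingale and the product $X_n^2$ is a strict submartingale). The paper's own proof breaks at exactly the point you anticipated. It uses the asymmetric decomposition
\[
X_nY_n-X_{n-1}Y_{n-1}=(X_n-X_{n-1})Y_n+X_{n-1}(Y_n-Y_{n-1})
\]
and then bounds $|\mathbb{E}\{(X_n-X_{n-1})Y_n\mid\mathcal{F}_{n-1}\}|$ by $M\,|\mathbb{E}\{X_n-X_{n-1}\mid\mathcal{F}_{n-1}\}|=0$. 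That step tacitly treats $Y_n$ as $\mathcal{F}_{n-1}$-measurable, but $Y_n$ is only $\mathcal{F}_n$-measurable, so it cannot be pulled out; the honest estimate is $M\,\mathbb{E}\{|X_n-X_{n-1}|\mid\mathcal{F}_{n-1}\}$, which is generally strictly positive. (The second term is fine, because $X_{n-1}$ really is $\mathcal{F}_{n-1}$-measurable.) The result and proof are salvageable only under an additional hypothesis that kills the predictable covariation, e.g.\ conditional independence of the increments given $\mathcal{F}_{n-1}$, which is what you propose to extract from Lemma~\ref{exchangeable} and the structure used in Lemma~\ref{piestimate}. So your plan is not merely a different route to the same conclusion\,---\,it is the route the paper actually needs, and the lemma should be restated with the orthogonality hypothesis made explicit before it is invoked.
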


When the exchangeability is not satisfied, the expression \ref{marting} is not a martingale anymore so we can divide it by \begin{equation}
    \frac{V^{test}(\tau)}{V^{cal}(\tau) + 1} \leq \frac{V_{1}^{test}(\tau)}{V_{1}^{cal}(\tau) + 1} + \frac{V_{2}^{test}(\tau)}{V_{2}^{cal}(\tau) + 1},
\end{equation}
where $V_{1}^{test}(\tau)$ are the samples that belong to class 2 but are classified to class 1 when the threshold is $\tau$, and $V_{2}^{test}(\tau)$ are the samples that belong to class 1 but are classified to class 2 when the threshold is $\tau$.

\begin{lemma}\label{piestimate}
We force the following discrete-time filtration that describes the misclassification process:$$ F_{k} = \{ \Delta(V^{test}(s_{k}),V^{cal}(s_{k}))\}_{t_{l}\leq s_{k}\leq t},$$ where $s_{k}$ corresponds to the threshold(time) when exactly k subjects, combining the subjects in both $ D^{cal}$ and $ D^{test}$, are misclassified, $n = n^{cal} + n^{test}$. Then $$\frac{V_{1}^{test}(\tau)}{V_{1}^{cal}(\tau) + 1} , \ \frac{V_{2}^{test}(\tau)}{V_{2}^{cal}(\tau) + 1}$$ are both martingales.
\end{lemma}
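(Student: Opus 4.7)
The plan is to reduce the non-exchangeable setting to a within-class exchangeability argument, then verify the martingale identity by a Pólya-urn-style calculation. Under Assumption~\ref{assumption3}, the conditional distribution of the scores given the true class label is identical in $D^{cal}$ and $D^{test}$. Consequently, although the pooled collection is not exchangeable when $\pi_{cal}\neq\pi_{test}$, the sub-pool of class-$2$ samples---exactly the set feeding $V_1$---is exchangeable across $D^{cal}$ and $D^{test}$; the symmetric statement holds for class-$1$ and $V_2$. Thus it suffices to treat $V_1^{test}/(V_1^{cal}+1)$, since the other ratio follows by a mirror-image argument.

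The key identity to verify (in the natural reverse-time sense of BH-style arguments) is
\begin{equation*}
\mathbb{E}\left[\frac{V_1^{test}(s_k)}{V_1^{cal}(s_k)+1}\,\Big|\,F_{k+1}\right]=\frac{V_1^{test}(s_{k+1})}{V_1^{cal}(s_{k+1})+1}.
\end{equation*}
Because the ratio does not change when the newly revealed misclassification is of type~$2$, those steps are automatic. For a type-$1$ step, write $U=V_1^{test}(s_{k+1})$ and $D=V_1^{cal}(s_{k+1})$. By class-$2$ exchangeability and Lemma~\ref{exchangeable}(ii), conditional on $F_{k+1}$ the ``peeled-off'' (smallest-score) type-$1$ sample is uniformly distributed over the $U+D$ current candidates, so it is a test sample with probability $U/(U+D)$ and a calibration sample with probability $D/(U+D)$. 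Splitting on the peeled label gives
\begin{equation*}
\frac{U}{U+D}\cdot\frac{U-1}{D+1}+\frac{D}{U+D}\cdot\frac{U}{D}=\frac{U(U-1)+U(D+1)}{(U+D)(D+1)}=\frac{U}{D+1},
\end{equation*}
which closes the identity.

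The main obstacle is that $F_k$ counts both error types simultaneously while the natural urn structure operates on a single type at a time. I plan to handle this by Lemma~\ref{exchangeable}(i): the type-$2$ events are conditionally independent of the type-$1$ peeling order given the appropriate $\sigma$-algebra, so enlarging the type-$1$ sub-filtration by type-$2$ events preserves the relevant conditional expectations and hence the martingale identity lifts from the sub-filtration to $F_k$. A second delicate point is that the peeling order on type-$1$ samples is governed by the actual scores rather than by sample identities; the uniform-peel claim therefore uses Assumption~\ref{assumption3} not merely as equality of class-conditional CDFs, but as the i.i.d.\ nature of scores within a class, together with the order-statistic (symmetric-function) formulation of the filtration familiar from BH-type martingale proofs. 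Once these coordination and measurability issues are resolved, the identical computation applied to class-$1$ samples yields the martingale property for $V_2^{test}/(V_2^{cal}+1)$.
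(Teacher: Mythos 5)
Your proposal is correct and follows essentially the same route as the paper: the same Pólya-urn transition probabilities (here justified by within-class exchangeability under Assumption~\ref{assumption3}), the identical algebraic check $\tfrac{U}{U+D}\cdot\tfrac{U-1}{D+1}+\tfrac{D}{U+D}\cdot\tfrac{U}{D}=\tfrac{U}{D+1}$, and the appeal to Lemma~\ref{exchangeable} to pass from the single-error-type sub-filtration to the joint filtration $F_k$. You are in fact more explicit than the paper about the coordination issue between the two error types and about why within-class i.i.d.\ scores are what licenses the uniform-peeling step, which the paper only gestures at in its closing line.
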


\begin{remark}
    From Theorem \ref{datadrivenpidifferent}, we give an upper bound for \mbox{FSR} control when data are not exchangeable between $D^{cal}$ and $D^{test}$, but the upper bound is influenced by different $\pi$. When $\pi$ is extremely large or small, this can lead to a large upper bound. Therefore, our future work will propose a new q-value method to estimate \mbox{mFSR} to give a more precise control.
\end{remark}

\subsection{Weighted data-driven procedure}
We still consider the situation of Assumption \ref{assumption1} here. When the reliability of calibration data is lower than that of test data, we can improve our method by assigning more weight to the test data. In this case, we first prove a new mirror process to demonstrate that it is a martingale. Then, we propose a new algorithm and provide a theorem to ensure its reliability.
\begin{corollary}\label{weightmartingale}
	we force the following discrete-time filtration that describes the misclassification process:$$ F_{k} = \{ \Delta(R^{test}(s_{k}),R^{cal}(s_{k}))\}_{t_{l}\leq s_{k}\leq t},$$ where $s_{k}$ corresponds to the threshold(time) when exactly k subjects, combining the subjects in both $ D^{cal}$ and $ D^{test}$ are rejected. Then $$ \frac{R^{cal}(s_{k})+K\cdot R^{test}(s_{k}) + K}{R^{test}(s_{k} )+ 1 }$$ is a martingale for any positive integer $K$.
\end{corollary}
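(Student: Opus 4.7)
The plan is to verify the one-step backward martingale identity $\mathbb{E}[M_k \mid F_{k+1}] = M_{k+1}$ for $M_k := (R^{cal}(s_k) + K \cdot R^{test}(s_k) + K)/(R^{test}(s_k) + 1)$. Writing $a := R^{test}(s_{k+1})$ and $b := R^{cal}(s_{k+1})$ with $a + b = k+1$, the first step is to identify the two possible backward transitions. Raising the threshold from $s_{k+1}$ to $s_k$ removes exactly one subject from the rejection set, so the state moves to $(a-1, b)$ if that subject is from $D^{test}$ and to $(a, b-1)$ if it is from $D^{cal}$. By the exchangeability of the pooled sample $D^{cal}\cup D^{test}$ (inherited from Assumption \ref{assumption1}), conditional on $F_{k+1}$ the removed subject is uniform over the current rejection set, so the two events have probabilities $a/(k+1)$ and $b/(k+1)$, respectively, mirroring the conditional calculation behind Lemma \ref{piestimate}.

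Substituting the explicit value of $M_k$ under each branch gives
\begin{equation*}
\mathbb{E}[M_k \mid F_{k+1}] = \frac{a}{k+1}\cdot\frac{b + Ka}{a} + \frac{b}{k+1}\cdot\frac{b - 1 + Ka + K}{a + 1}.
\end{equation*}
The key observation — and the reason the additive offset $+K$ in the numerator of $M_k$ appears in exactly that form — is that under the test branch the numerator becomes $b + K(a-1)+K = b + Ka$, which cancels cleanly with the $a$ in the reduced denominator. Placing the two terms over the common denominator $(k+1)(a+1)$ and grouping, the combined numerator factors as $b(a+b) + Ka(a+b) + K(a+b) = (a+b)(b + Ka + K) = (k+1)(b + Ka + K)$. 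Cancelling $(k+1)$ leaves $(b + Ka + K)/(a+1) = M_{k+1}$, which is the desired martingale identity. Since the computation is linear in $K$, the conclusion holds for every positive integer $K$ (in fact for every real $K$).

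The step I expect to need the most care is handling the boundary configurations where $a=0$ or $b=0$, at which one of the two conditional probabilities vanishes and its accompanying algebraic summand is formally of the shape $0/0$. I would deal with this by the standard convention that a probability-zero branch contributes nothing to the expectation, and by relying on the range restriction $t_l \leq s_k \leq t$ in the definition of the filtration, which is fixed so that the process is considered only on the regime where the algebraic computation above goes through verbatim. Once the one-step identity is in hand, the martingale claim is immediate, in strict parallel to the structure used in the proof of Lemma \ref{piestimate} and Lemma \ref{chengji}.
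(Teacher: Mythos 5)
Your proof is correct and follows essentially the same route as the paper: identify the two possible one-step transitions, assign them the exchangeability-derived conditional probabilities proportional to the current counts, and verify the one-step conditional-expectation identity by direct algebra. The only cosmetic difference is your index shift (conditioning on $F_{k+1}$ and evaluating $M_k$, where the paper conditions on $F_k$ and evaluates at $s_{k-1}$), and your remark that the argument is linear in $K$ — both are harmless.
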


 Therefore we define:
\begin{equation}\label{Q_W}
         \hat{Q}_{w}(t, K) = \frac{\frac{1}{n^{cal}+1}\{\sum_{i\in D^{cal}}\mathbb{I}(S_{i}\geq t, \hat{Y}_{i}\neq Y_{i}) + 1\}}{\frac{1}{n^{cal} + K\cdot(n^{test}+1)}\{\sum_{i\in D^{cal}}\mathbb{I}(S_{i}\geq t) + K\cdot (\sum_{i\in D^{test}}\mathbb{I}(S_{i}\geq t ) + 1)\}}. 
\end{equation} We take $K$ as a constant as the ratio between the weights of test data and calibration data. Choose the smallest $t$ such that the estimated \mbox{FSP} is less than $\alpha$. Define
\begin{equation}\label{tau_W}
	 \hat{\tau}_{w} =  \hat{Q}^{-1}_{w}(\alpha) = \inf \{t: \hat{Q}_{w}(t)\leq \alpha\},
\end{equation}
and \begin{equation}\label{R_value_W}
    R^{w}_{n+j} = \inf_{t\leq \hat{s}}\{\hat{Q}_{w}(t)\},
\end{equation} where $ \hat{s} = \hat{S}(X_{n+j} = x)$. So, the decision rule can be written as: for a given $\alpha$, \begin{equation}\label{decision_W}
    \delta_{w}^{n+j} =  \underset{i}{{\arg\max} \, S^{i}_{n+j}} \cdot \mathbb{I}(R^{w}_{n+j} \leq \alpha).
\end{equation}

\renewcommand{\algorithmicrequire}{\textbf{Input:}}
\renewcommand{\algorithmicensure}{\textbf{Output:}}

\begin{algorithm}\label{algorith}
\caption{\mbox{FSR} control procedure with different weights on test data.}
\label{alg:5}
\begin{algorithmic}[1]
\REQUIRE Existing data $D$ and its real class, test data $D^{test}$.
\ENSURE Classification result of test data.
\STATE Randomly split $D$ into $D^{train}$ and $D^{cal}$.
\STATE Train a black-box model only on $D^{train}$ to get the score function $S^{1}(x)$, $S^{2}(x)$.
\STATE Predict base scores for all observations in $D^{test}$ and $D^{cal}$.
\STATE Take $K$ as a constant as the ratio of the weights of test data and calibration data.
\STATE Compute the q-value for all test data using equation \eqref{Q_W}.
\STATE Compute the R-value for all test data using equation \eqref{R_value_W}.
\STATE Threshold the R-value at a user-specified level $\alpha$, assigning an observation in $ D^{test}$ to class $\hat{Y}_{i}$ if $\hat{R}_{i}\leq \alpha$, where $$\hat{Y}_{i} =  \underset{j}{{\arg\max} \, S^{j}_{i}} \cdot\mathbb{I}(R^{w}_{i}\leq\alpha).$$
\STATE Return an indecision result on all remaining observations where $\hat{R}_{i} \geq \alpha$.
\end{algorithmic}
\end{algorithm}

\begin{remark}
	In the proof of martingale, the structure is the most important thing. Any expression structure similar to \eqref{mart} can be proved as a martingale. A martingale plus a constant is still a martingale. Therefore, in the article FASI, $$ \frac{R^{cal}(s_{k}) + R^{test}(s_{k}) + 1 }{ R^{test}(s_{k}) + 1}$$ can be written as $$ 1 + \frac{R^{cal}(s_{k})}{ R^{test}(s_{k}) + 1}.$$ The structure of adding one to the denominator is the same as the expression \ref{marting}. 
\end{remark}

\begin{theorem}\label{datadrivenweighted}
	Under the Assumption \ref{assumption1}, assigning different weights to calibration data and test data according to different scenarios, we can get different weighted selective classification methods, and they can also control the global \mbox{FSR}, for all positive integers $K$ by Algorithm \ref{alg:5}.
\end{theorem}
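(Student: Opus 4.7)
The plan is to mimic the proof of Theorem \ref{data_driven}, replacing the unweighted q-value $\hat{Q}(\cdot)$ by the weighted $\hat{Q}_w(\cdot, K)$ and using Corollary \ref{weightmartingale} to supply the correct martingale on the rejection side. First, from the definition $\hat{\tau}_w = \inf\{t: \hat{Q}_w(t, K) \leq \alpha\}$, I would extract the algebraic inequality
$$ V^{cal}(\hat{\tau}_w) + 1 \leq \frac{\alpha(n^{cal}+1)}{n^{cal}+K(n^{test}+1)} \bigl(R^{cal}(\hat{\tau}_w) + K(R^{test}(\hat{\tau}_w)+1)\bigr),$$
and then decompose the $\mbox{FSP}$ as
$$ \mbox{FSP}(\hat{\tau}_w) = \frac{V^{test}(\hat{\tau}_w)}{V^{cal}(\hat{\tau}_w)+1} \cdot \frac{V^{cal}(\hat{\tau}_w)+1}{R^{test}(\hat{\tau}_w)\vee 1} \leq \frac{\alpha(n^{cal}+1)}{n^{cal}+K(n^{test}+1)} \cdot M_1(\hat{\tau}_w) \cdot M_2(\hat{\tau}_w),$$
where $M_1(t) := V^{test}(t)/(V^{cal}(t)+1)$ and $M_2(t) := (R^{cal}(t) + K(R^{test}(t)+1))/(R^{test}(t)+1)$. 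The case $R^{test}(\hat{\tau}_w)=0$ contributes nothing since $\mbox{FSP}=0$ there, while on $\{R^{test}(\hat{\tau}_w)\geq 1\}$ the identity $R^{test}\vee 1 = R^{test}$ lets one reconcile the denominator with the $R^{test}+1$ appearing in $M_2$.

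Next, I would take expectations and invoke the martingale machinery. Under Assumption \ref{assumption1}, $M_1$ is a backward martingale on the filtration $F_k$ from Corollary \ref{weightmartingale} by the same exchangeability argument already used in the proof of Theorem \ref{data_driven}; $M_2$ is a martingale on the same $F_k$ by Corollary \ref{weightmartingale}. Lemma \ref{chengji}, applied to these two bounded processes on the common filtration, then gives that $M_1 \cdot M_2$ is itself a martingale. Because $\hat{\tau}_w$ takes values in the finite grid of thresholds $\{s_k\}$, it is a bounded stopping time, and Doob's optional stopping theorem yields
$$ \mathbb{E}\bigl[M_1(\hat{\tau}_w)\,M_2(\hat{\tau}_w)\bigr] = \mathbb{E}\bigl[M_1(s_0)\,M_2(s_0)\bigr],$$
where $s_0$ denotes the initial point of the backward filtration, at which every subject in $D^{cal}\cup D^{test}$ has been rejected. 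At $s_0$ the factor $M_2(s_0) = (n^{cal}+K(n^{test}+1))/(n^{test}+1)$ is deterministic, and the exchangeability computation from the proof of Theorem \ref{data_driven} supplies $\mathbb{E}[M_1(s_0)] \leq n^{test}/(n^{cal}+1)$.

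Chaining the bounds finally gives
$$ \mbox{FSR} \leq \frac{\alpha(n^{cal}+1)}{n^{cal}+K(n^{test}+1)}\cdot\frac{n^{test}}{n^{cal}+1}\cdot\frac{n^{cal}+K(n^{test}+1)}{n^{test}+1} = \alpha\cdot\frac{n^{test}}{n^{test}+1}\leq \alpha,$$
uniformly in the positive integer $K$, which is the claim. The principal obstacle is the use of Lemma \ref{chengji}: one has to verify that $M_1$ (whose value depends on the misclassification process $V^{test}, V^{cal}$) and $M_2$ (whose value depends only on the rejection process $R^{test}, R^{cal}$) are adapted to one and the same filtration $F_k$, and one must also cleanly handle the cosmetic $R^{test}\vee 1$ versus $R^{test}+1$ mismatch so as not to inflate the final constant. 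Once these technical points are settled, the remaining calculations are routine modifications of the proof of Theorem \ref{data_driven}, and the weight $K$ drops out exactly thanks to the telescoping between the denominator of the stopping-time bound and the deterministic value of $M_2$ at $s_0$.
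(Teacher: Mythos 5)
Your route is genuinely different from the paper's. The paper's proof of Theorem~\ref{datadrivenweighted} does not invoke Lemma~\ref{chengji} at all: it writes out the three-factor decomposition $\mbox{FSP}(\tau)=\frac{V^{test}(\tau)}{V^{cal}(\tau)+1}\cdot\frac{V^{cal}(\tau)+1}{R^{cal}(\tau)+KR^{test}(\tau)+K}\cdot\frac{R^{cal}(\tau)+KR^{test}(\tau)+K}{R^{test}(\tau)+1}$, bounds the middle factor by the $q$-value definition, and then applies the $V$-martingale and the weighted $R$-martingale of Corollary~\ref{weightmartingale} \emph{one at a time} via nested conditional expectations before combining the resulting constants. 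You instead fold the $R$-factor into a single product $M_1\cdot M_2$, invoke Lemma~\ref{chengji} to assert that this product is a martingale, and stop once. Your final telescoping of the $K$-dependent constants and the bound $\alpha\cdot n^{test}/(n^{test}+1)\leq\alpha$ agree with the paper's arithmetic.

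The obstacle you flag, however, is a genuine gap and not merely a technicality to be settled afterwards. Lemma~\ref{chengji} requires that $(X_n)$ and $(Y_n)$ be martingales with respect to \emph{one and the same} filtration $\sigma(s_j)_{j\leq n}$. In this paper the two relevant martingales are constructed on distinct filtrations: the $V$-martingale (from the proof of Theorem~\ref{data_driven}) lives on a filtration whose discrete time $s_k$ indexes misclassification events, whereas the weighted $R$-martingale of Corollary~\ref{weightmartingale} lives on a filtration whose time indexes rejection events. A subject can be rejected without being misclassified, so these are different event clocks and neither filtration contains the other. Without explicitly exhibiting a common refinement under which both $M_1$ and $M_2$ remain martingales \emph{and} $\hat\tau_w$ remains a stopping time, the appeal to Lemma~\ref{chengji} and the subsequent optional-stopping step $\mathbb{E}[M_1(\hat\tau_w)M_2(\hat\tau_w)]=\mathbb{E}[M_1(s_0)M_2(s_0)]$ are unjustified. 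The paper avoids this particular difficulty by never multiplying the two martingales at the process level, though at the price of a factorization of $\mathbb{E}[M_1(\tau)M_2(\tau)]$ into separate expectations that itself deserves scrutiny. Until the common-filtration construction is supplied, your proof is incomplete.
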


Therefore, taking different $K$ always controls the global \mbox{FSR}, and the degree of control is the same. Observing the role of $ K$ in the $\hat{Q}_{w}(t)$ formula shows that the larger the value of $K$, the more times the test data is reused, that is, the higher the weight of the test data. Therefore, this method is suitable for when the reliability of the calibration data is not high, including when the calibration data is simulated data, or there exists noisy data, etc. Then, we can give the test data a higher weight by adjusting the value of $K$.

\section{Numerical Simulation}\label{sectionfive}
This section presents the results from two simulation scenarios comparing FASI to our algorithm. We illustrate that our algorithm can control \mbox{FSR} more accurately so that the algorithm has greater power. We denote the distribution of $S^{1}$ in test data as $$F(\cdot) = \pi_{test} \cdot F^{1}_{1}(x) + (1-\pi_{test})\cdot F^{1}_{2}(x), $$ and the distribution of calibration data as $$F(\cdot) = \pi_{cal} \cdot F^{1}_{1}(x) + (1-\pi_{cal})\cdot F^{1}_{2}(x). $$ We simulate 100 data sets and apply our algorithm and FASI with R-values defined in \eqref{R_value_} at \mbox{FSR} level 0.1 to the simulated data sets. We take the distribution of $F^{1}_{1}$ and $F^{1}_{2}$ as follows:$$F^{1}_{1}=\mathcal{N}\left(\frac{3}{8}, \frac{1}{64}\right), \quad F^{1}_{2} = \mathcal{N}\left(\frac{5}{8}, \frac{1}{64}\right).$$ And taken $n^{cal} = 1500$ and $n^{test} = 1000$. When the sample score is less than 0 or greater than 1, we take it as 0 or 1.

\begin{figure}
\includegraphics[scale=0.37]{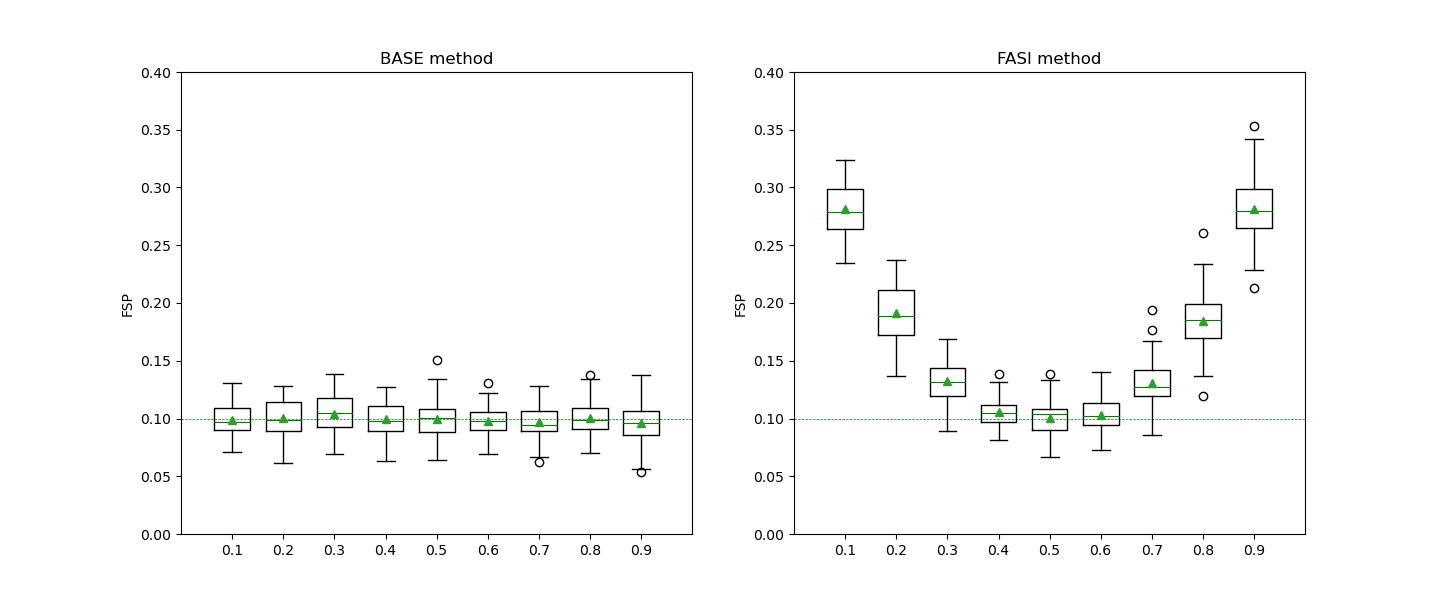}
\caption{The horizontal axis is $\pi_{cal}$, and $\pi_{test} $ is taken as 0.5. The picture on the left shows the change of \mbox{FSP} with $\pi_{cal}$ in our base method. The picture on the right shows the change of \mbox{FSP} with $\pi_{cal}$ in the FASI method.}
\label{pic1}
\end{figure}

Next, we verified through multiple experiments that our base method can control \mbox{FSR} no matter what value $\pi$ takes and compared it with FASI. Although FASI can control the error rate of each class, the overall error rate will be inflated or conservative. It is difficult for FASI to accurately control global \mbox{FSR} near the predetermined threshold. As shown in fig \ref{pic1}, we fixed $\pi_{test} $ at 0.5 and took $\pi_{cal} $ as $ \{ 0.1, 0.2,..,0.8, 0.9 \} $ respectively. Through experiments, we found that when the value of $\pi_{cal}$ gradually moves away from 0.5, the \mbox{FSP} of the FASI algorithm gradually becomes uncontrollable, but the \mbox{FSP} of our algorithm is still stable.

\begin{figure}
\includegraphics[scale=0.37]{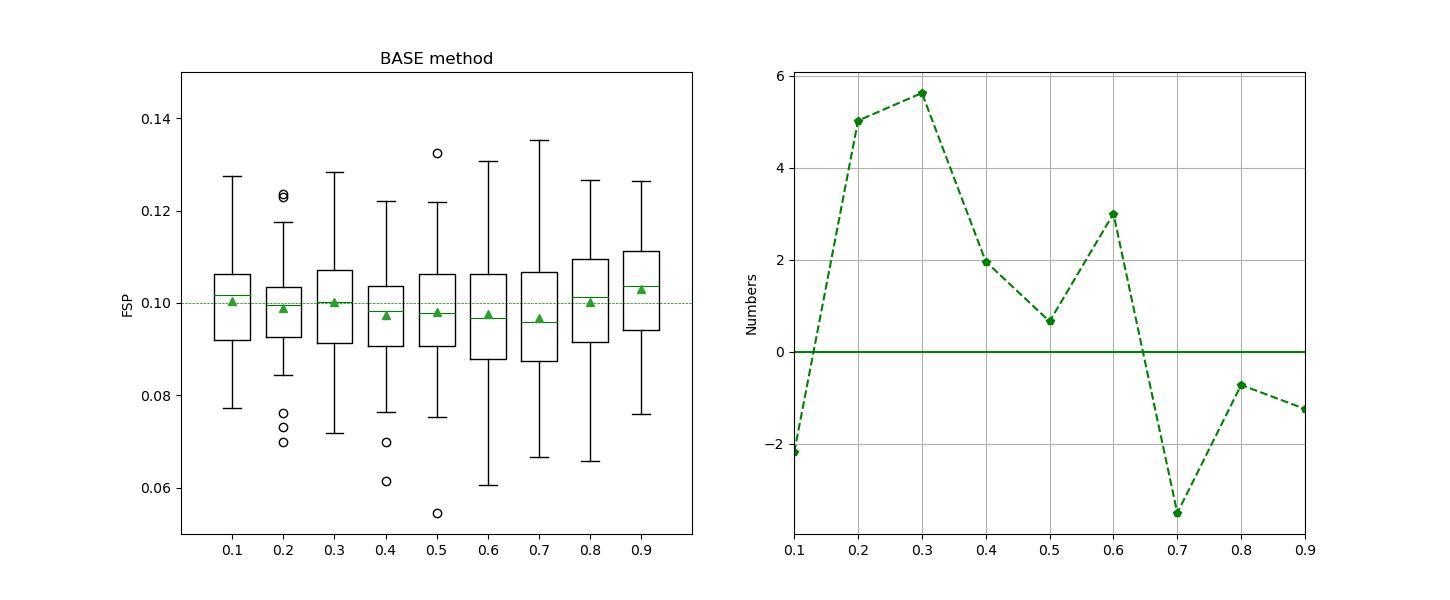}
\caption{The horizontal axis is $\pi_{test}$, and $\pi_{cal} $ is taken as 0.5. The picture on the left shows the change of \mbox{FSP} with $\pi_{test}$ in our base method, and the image on the right shows how the number of rejections of the base method minus the number of rejections of the FASI method changes with $\pi_{test}$ when taking $n^{test}$ as 1000. }
\label{pic2}
\end{figure}
In the second group experiment, we set $\pi_{cal}$ fixed at 0.5, and $\pi_{test}$ is set to $\{0.1, 0.2,...,0.8, 0.9\}$ respectively, we will find that the \mbox{FSP} of the base method and the FASI method are still stable near the threshold, and comparing the power of the two algorithms, we will find that the overall result of the base method is slightly better than the FASI method, which also verifies the optimality of the score function we proved earlier, as shown in fig \ref{pic2}.

\begin{figure}
\includegraphics[scale=0.4]{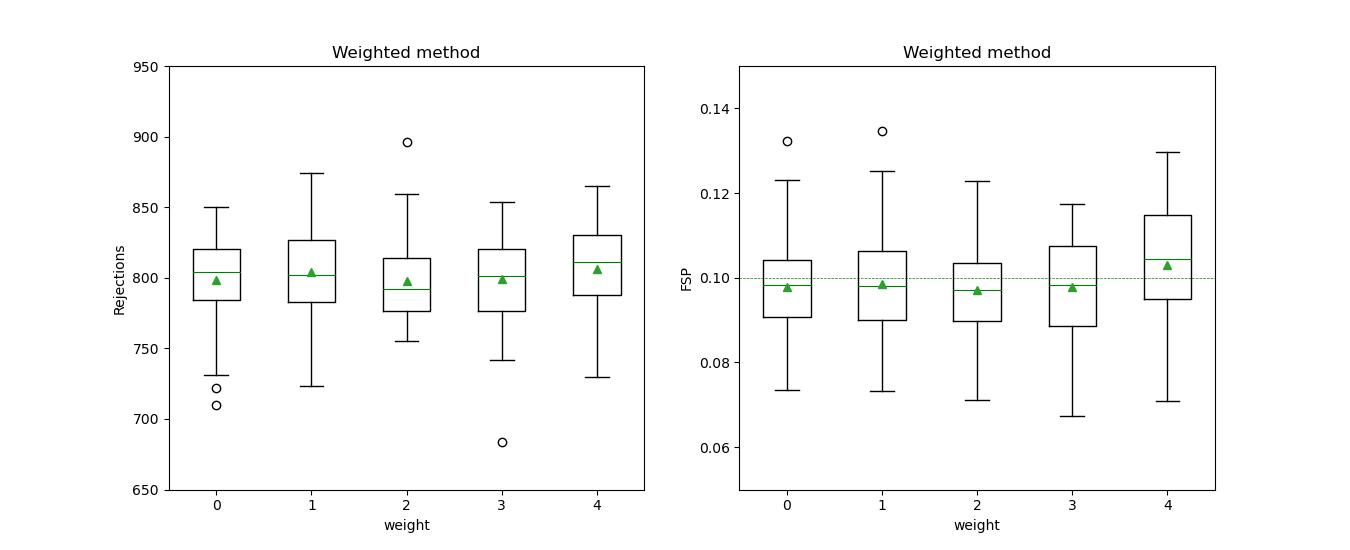}
\caption{We set the weights to 0, 1, 2, 3, and 4 respectively, and repeated the experiment one hundred times for each group. The picture on the left shows the change in the number of rejections with weight in our base method, and the image on the right shows the change of \mbox{FSP} with weight.}
\label{pic_weight}
\end{figure}
Finally, we set the weight $ K $ as $\{0, 1, 2, 3, 4\}$ respectively and repeated the experiment one hundred times for each group. We draw a box plot, and it is verified through the image that the weight method can control \mbox{FSR}, but the weight parameter currently does not show a strong correlation with \mbox{FSP}, as shown in fig \ref{pic_weight}.

\section{Discussions}\label{sectionsix}
\mbox{FSR} control has a wide range of applications in various fields of real life. By introducing indecision choices, we effectively reduce the error rate. Essentially, samples that are difficult to classify are handed over to experts for handling. We first introduce the optimality function in the oracle case and provide theoretical proof. We narrow down the search for the optimal solution of this problem to the case where the thresholds for all classes are identical. 

In the data-driven procedure, we provide the algorithm with exchangeability between $D^{cal}$ and $D^{test}$, the algorithm with different $\pi$, and the algorithm that allocates more weight to test data. We include proof of theoretical \mbox{FSR} control for all algorithms. Finally, we successfully extended binary classification to multi-classification tasks. 

However, when $\pi$ differs, the proposed algorithm still cannot accurately control the \mbox{FSR} at the threshold but only gives an unstable upper bound. In our subsequent research, we will continue to improve this algorithm by justifying the q-value formula. In the appendix, we proved the optimal decision function in multi-class classification by constructing a mapping, which is only suitable for multi-class classification problems. Therefore, we will continue to investigate these types of issues in future research.

\begin{appendix}
\section{Connection with multi-class classification method}\label{appA}
\subsection{Multi-class classification}
In the case of proving the optimality, we observe that the indecision method and the multi-class classification method mentioned in the previous section are equivalent to the problem of binary classification. By establishing a set of mappings, we can complete the unification of these two methods on the binary classification problem and prove the optimality of the multi-classification task on the binary classification.

We re-elaborate the ideas of the two methods: We have two classes, $\{1,2\}$. For the indecision-method, we have decision space$ \{\{ 1\},\{ 2\}, \{0\}\}$, and $$ \delta_{id}(t) =  {\arg\max}_{i} \, S^{i} \cdot\mathbb{I}(S\geq t).$$ When we do not have enough confidence to judge that $x$ belongs to a certain class, we make a prediction $\hat{Y} = 0 $. And for multi-class classification method, the decision space is $\{\{1\},\{2\},\{1,2\}\}$, and the decision rule is $$ \delta_{mc}(t)=\left\{
\begin{array}{rcl}
	\{1\},       &   \ when   & {S^{1}\geq t}.\\
	\{2\},     &    \ when & {S^{2} \geq t}.\\
	\{1,2\},       &   \ when   & {S < t, S^{1} + S^{2}\geq t}.
\end{array} \right. $$ where $ S = \max\{S^{1}, S^{2}\}$, the prediction$ \{1,2\}$ means that the probability of the real class of sample i in the prediction set is greater than the given threshold. For class c, the recall rate of the multi-class classification method is $ \mathbb{P}(c\in \hat{Y}|Y = c)$, and the global recall rate is $ \mathbb{P}(Y\in \hat{Y})$. We denote them as $r_{c}$ and $r$.

So if we make the following map $F: \ \delta_{id}(X) \rightarrow \delta_{mc}(X)$,
$$ F(\hat{Y}_{id})=\left\{
\begin{array}{rcl}
	\{1\},       &  \ when    & {\hat{Y} = 1}.\\
	\{2\},     &   \ when   & {\hat{Y} = 2}.\\
	\{1,2\},       &   \ when   & {\hat{Y} = 0}.
\end{array} \right. $$
We build a mapping that connects the indecision method with the multi-class classification method, and $F$ is a bijection.

We prove that if $S(x)$ is the optimal solution of the indecision method, then the decision rule for the multi-class classification method by mapping $F$ from $S$ is also the optimal solution. Assume for the indecision method $$ \mbox{mFSR} = \mathbb{P}(Y = \hat{Y}|\hat{Y} \neq 0) = \alpha,$$ and the number of test data is $ n^{test}$, and the number of indecisions is $ n^{id}$. So, for the multi-class classification method with bijection $F$, we have \begin{equation}\label{tran}
	\mathbb{P}(Y\in \hat{Y}) = \frac{(n^{test} - n^{id})\cdot \alpha}{n^{test}}.
\end{equation} From the conclusion of Section \ref{sectionthree}, we can know that under the decision rule $ \delta_{id} $, as the number of indecision increases, \mbox{mFSR} decreases. So, in equation \eqref{tran}, the error rate always varies monotonically with $\alpha$.

The power function in the first method is $$EPI = \frac{1}{n^{test}}\{\sum_{i \in D^{test}}\mathbb{I}(\hat{Y}_{i} = 0)\}.$$ We want to minimize the $ EPI$ under the control of \mbox{FSR}. In the second method, the power function is the average length of the prediction set, so we define $$ ||F(\hat{Y})||_{1}  = \frac{\sum_{i\in D^{test}}|F(\hat{Y}_{i})|}{n^{test}},$$ and we want to minimize it. We can also prove that the power of these two methods is equivalent.

\begin{equation}
	\begin{split}
		\min\{||F(\hat{Y})||_{1}\}  &= \min \left\{\frac{\sum_{i\in D^{test}}|F(\hat{Y}_{i})|}{n^{test}}\right\}  \\
		&= \min\left\{ \frac{n^{test} + \sum_{i\in D^{test}}\mathbb{I}(F(\hat{Y}_{i}) = \{1,2\})}{n^{test}}\right\} \\
		&= \min\left\{1 + \frac{\sum_{i\in D^{test}}\mathbb{I}(\hat{Y}_{i} = 0)}{n^{test}}\right\} \\
		&= \min\left\{\frac{\sum_{i\in D^{test}}\mathbb{I}(\hat{Y}_{i} = 0)}{n^{test}}\right\} \\
		&= \min\{EPI\}.
	\end{split}
\end{equation}
So, we can get the optimality of the second method on the binary classification problem from the optimality of the first method.
\begin{theorem}\label{multiclassoptimal}
	In the multi-class classification oracle situation on binary classification problem, for a given threshold $\beta$, let $ D_{\beta}$ denote the collection of selection rules that satisfy $ \mathbb{P}(Y\in \hat{Y})\geq \beta $. Let $||F(\hat{Y})||_{1}^{\delta}$ denote the value of $||F(\hat{Y})||_{1}$ of an arbitrary decision rule $\delta$. Then the oracle procedure is optimal in the sense that $||F(\hat{Y})||_{1}^{\delta_{mc}}\leq ||F(\hat{Y})||_{1}^{\delta} $ for any $ \delta \in D_{\beta}$ and finite sample size $n^{test}$.
\end{theorem}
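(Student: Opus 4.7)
The plan is to derive Theorem \ref{multiclassoptimal} from Theorem \ref{optimaloracle} by transporting both the constraint and the objective along the bijection $F$ built in the paragraphs just above the statement. Since $F$ is a one-to-one correspondence between the per-sample decision spaces $\{\{1\},\{2\},\{0\}\}$ and $\{\{1\},\{2\},\{1,2\}\}$, the pullback $\delta \mapsto F^{-1}\circ \delta$ is a bijection between multi-class rules and indecision rules; in particular, matched at the same threshold, it sends $\delta_{mc}$ to the indecision oracle $\delta_{OR}$ from Section \ref{sectionthree}.

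First I would translate the constraint. For any multi-class rule $\delta$ with pullback $\delta_{id} = F^{-1}\circ\delta$, decomposing by whether $\hat Y_i^{id} = 0$ or not gives the relation between the recall $\mathbb{P}(Y\in \hat Y)$ and the indecision \mbox{mFSR} underlying equation \eqref{tran}. Combined with the monotonicity of $Q(t) = \mbox{mFSR}(t)$ established in Lemma \ref{lemma1}, the multi-class constraint $\mathbb{P}(Y\in \hat Y)\geq \beta$ corresponds under $F^{-1}$ to an \mbox{mFSR} upper bound $\alpha(\beta)$; that is, $D_\beta$ is in bijection with a set $D_{\alpha(\beta)}$ of the form appearing in Theorem \ref{optimaloracle}.

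Next I would match the objectives using the identity derived immediately before the statement, $||F(\hat Y)||_1 = 1 + EPI$. Minimising the expected prediction-set size on the multi-class side is then literally the same problem as minimising $EPI$ on the indecision side. Since $\mbox{ETS} + V + n^{test}\cdot EPI = n^{test}$ and the constraint $\mbox{mFSR}\leq \alpha(\beta)$ pins $V$ proportionally to $\mbox{ETS}$, minimising $EPI$ is equivalent to maximising $\mbox{ETS}$. Theorem \ref{optimaloracle} supplies $\delta_{OR}$ as the maximiser of $\mbox{ETS}$ over $D_{\alpha(\beta)}$, and applying $F$ yields $\delta_{mc} = F\circ\delta_{OR}$ as the minimiser of $||F(\hat Y)||_1$ over $D_\beta$, which is exactly the claim.

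The most delicate step, I expect, will be making the translation $\beta\mapsto \alpha(\beta)$ unambiguous, because the relation between recall and \mbox{mFSR} also involves the indecision count $n^{id}$, which varies with the rule. The cleanest route is to parametrise both the multi-class rule and its pullback by the common threshold $t$ and invoke the monotonicity of $Q(t)$ from Lemma \ref{lemma1}: recall is then a monotone function of $t$, and the threshold saturating $\mathbb{P}(Y\in \hat Y)\geq \beta$ on the multi-class side is mapped by $F^{-1}$ to exactly the threshold saturating $\mbox{mFSR}\leq \alpha(\beta)$ on the indecision side. Once this correspondence is pinned down, the remaining steps are routine bookkeeping.
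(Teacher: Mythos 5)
Your proposal mirrors the paper's own proof: both derive the multi-class optimality from Theorem \ref{optimaloracle} by transporting constraint and objective through the bijection $F$, using the identity $||F(\hat Y)||_1 = 1 + EPI$ established just before the statement, the recall--$\mbox{mFSR}$ relation of \eqref{tran}, and the monotonicity from Lemma \ref{lemma1} to set up the one-to-one correspondence between $\beta$ and $\alpha$. Your explicit reduction ``minimising $EPI$ under the mFSR constraint $\Longleftrightarrow$ maximising $\mbox{ETS}$'' is left implicit in the paper but is exactly the bookkeeping it relies on, so the routes are substantively the same.
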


\subsection{Optimality theory in selective K-class classification}
Assume that we have $K$ classes, and the score function is $S^{j}_{i} = P(Y_{i} = j| x_{i})$. We assume that the distribution of $S^{j}$ is
$$S^{j}(x) \sim F^{j}(x) = \pi_{1}\cdot F^{j}_{1} + \pi_{2}\cdot F^{j}_{2} + ... +\pi_{K}\cdot F^{j}_{K}, $$where $\pi_{i}$ is the proportion of class $i$ among all data, $F_{i}^{j}$ is the CDF of score function $S^{j}$ condition on class $i$. Inspired by simple situations, we will prove that the decision function $ S_{i} = \max\{S^{1}(x_{i}),S^{2}(x_{i}),..., S^{K}(x_{i})\}$ is optimal when the number of classes is $K$. Define the formula of $ Q(t)$ as \begin{equation} \label{eq1_K}
	\begin{split}
		Q(t)&=P(Y\neq \hat{Y}|S(x)\geq t),    \\
		&=\frac{E(\sum_{j\in D^{test}}((1-S_{j})\mathbb{I}(S_{j} \geq t)))}{E(\sum_{j\in D^{test}}\mathbb{I}(S_{j}\geq t))}.
	\end{split}
\end{equation} From Lemma \ref{lemma1}, the $\mbox{mFSR}(t)$ decreases monotonically with t. The decision rule can be changed to $$\delta^{i}_{OR} = \underset{j}{{\arg\max} \, S^{j}_{i}} \cdot\mathbb{I}\{S_{i} > Q^{-1}(\alpha)\}.$$

\begin{theorem}\label{optimaloracleKclass}
	Under the assumption of the distribution of $S^{1},S^{2},...,S^{K}$, and $\alpha$ have been chosen in advance. Let $ D_{\alpha}$ denote the collection of selection rules that satisfy $\mbox{mFSR} \leq \alpha$. Let $\mbox{ETS}_{\delta}$ denote the $\mbox{ETS}$ of an arbitrary decision rule $\delta$. Then, the oracle procedure is optimal in the sense that $\mbox{ETS}_{\delta_{OR}}\leq \mbox{ETS}_{\delta} $ for any $ \delta \in D_{\alpha}$. 
\end{theorem}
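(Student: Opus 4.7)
The plan is to mirror the two-step scheme behind Theorem \ref{optimaloracle} in the binary case. First I would reduce any admissible rule to a thresholding rule on $S_i = \max_j S^j(x_i)$, and then apply a Neyman--Pearson style argument, using the monotonicity established in Lemma \ref{lemma1}, to pin the threshold at $Q^{-1}(\alpha)$.

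To perform the reduction, I would argue a class-choice swap: for any $\delta \in D_\alpha$ and any sample $i$ with $\delta^i \neq 0$, replacing $\delta^i$ by $Y'_i = \arg\max_j S^j(x_i)$ weakly improves the rule, since the correctness probability $S^{\delta^i}(x_i)$ is pointwise dominated by $S_i$, so the swap weakly increases the per-sample ETS contribution and weakly decreases the per-sample mFSR numerator while leaving the denominator unchanged. Hence it is enough to consider rules of the form $\delta^i = Y'_i \cdot \mathbb{I}(d_i)$ with $d_i \in \{0,1\}$ a selection indicator. Within this class the problem becomes: maximize $\sum_i \mathbb{E}[S_i \mathbb{I}(d_i)]$ subject to $\sum_i \mathbb{E}[(1 - S_i - \alpha)\mathbb{I}(d_i)] \leq 0$. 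Introducing a Lagrange multiplier $\lambda \geq 0$ converts this into the pointwise problem of maximizing $\mathbb{E}[((1+\lambda)S_i - \lambda(1-\alpha))\mathbb{I}(d_i)]$, whose pointwise maximizer is $d_i = \mathbb{I}(S_i \geq t_\lambda)$ with $t_\lambda = \lambda(1-\alpha)/(1+\lambda)$. By Lemma \ref{lemma1} applied to $Q(t)$ in \eqref{eq1_K}, $Q$ is monotonically decreasing in $t$, so there exists a unique $t^{*} = Q^{-1}(\alpha)$ saturating the constraint; the corresponding rule is $\delta_{OR}$, and complementary slackness then yields optimality of $\delta_{OR}$ over $D_\alpha$.

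The main obstacle I anticipate is verifying that the hypothesis of Lemma \ref{lemma1} transfers cleanly to the $K$-class setting, i.e., that the induced density of $S_i = \max_j S^j(x)$ paired with the integrand $1-s$ genuinely fits the template of \eqref{eq2}. Because $S_i$ is a nontrivial transformation of the full score vector $(S^1(x),\dots,S^K(x))$, some care is needed to identify the correct marginal $p(s)$ on $[0,1]$ under the mixture $\pi_1 F_1 + \cdots + \pi_K F_K$ and to confirm that the resulting integrand is monotonically decreasing in the reformulated integral. A secondary subtlety is that the Neyman--Pearson reduction should remain valid when $d_i$ is allowed to depend on the entire data vector rather than on $x_i$ alone; this is handled by conditioning on $x_i$ and noting that $S_i$ is the sufficient statistic for both the ETS summand and the mFSR-numerator summand at sample $i$, so the Lagrangian maximizer remains measurable with respect to $x_i$ alone and the per-sample argument extends to joint rules.
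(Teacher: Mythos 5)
Your proposal is correct, and at its core it runs on the same Lagrange/Neyman--Pearson machinery the paper uses: characterize $\delta_{OR}$ as a threshold rule at $t^{*}=Q^{-1}(\alpha)$, pair the mFSR constraint with a multiplier $\lambda_{OR}=(1-Q^{-1}(\alpha)-\alpha)/Q^{-1}(\alpha)\geq 0$, and conclude $\lambda_{OR}(\mbox{ETS}_{\delta_{OR}}-\mbox{ETS}_{\delta})\geq 0$ by a complementary-slackness/pointwise-dominance argument. The one genuine organizational difference is that you insert an explicit preliminary reduction (the class-choice swap: replace any non-abstaining $\delta^i$ by $\arg\max_j S^j(x_i)$, which raises the per-sample ETS contribution from $S^{\delta^i}_i$ to $S_i$ and lowers the per-sample mFSR numerator, with the denominator unchanged) before optimizing only over the selection indicator $d_i$. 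The paper skips this reduction and instead applies the NP-type pointwise inequality directly to the vector of per-class indicators $\bigl(\mathbb{I}(\delta^i_{OR}=j)-\mathbb{I}(\delta^i=j)\bigr)(1-S^j_i-\alpha-\lambda_{OR}S^j_i)\leq 0$; note that for $K\geq 3$ this inequality is only valid after summing over $j$ within a fixed $i$ (the "gaining" argmax term dominates a possible "losing" non-argmax term), and your two-step arrangement neatly sidesteps that subtlety by making the class choice always be the argmax before the NP step. Your worry about whether Lemma \ref{lemma1} transfers is unfounded: $Q(t)$ in \eqref{eq1_K} is literally of the form \eqref{eq2} with $p$ the density of $S_i=\max_j S^j(x_i)$ (whatever that marginal is under the $K$-component mixture) and $f(s)=1-s$ decreasing, so monotonicity is immediate; likewise, the measurability concern about $d_i$ is handled automatically because the NP pointwise inequality holds for any joint rule, not just $x_i$-measurable ones, so no conditioning argument is needed.
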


\subsection{Data-driven procedure in selective K-class classification}

We still assume that the groups $\{(X_{i}, Y_{i}): i \in D^{cal} \cup D^{test}\} $are exchangeable. Our decision rule is $$ \delta^{j}(t) =  \underset{i}{{\arg\max} \, \hat{S}^{i}_{j}} \cdot \mathbb{I}(\hat{S}_{j} \geq t).$$ And the estimated false discovery proportion(\mbox{FSP}), as a function of $t$, is given by
\begin{equation}\label{decision_K}
	\hat{Q}(t) = \frac{\frac{1}{n^{cal}+1}\{\sum_{i\in D^{cal}}\mathbb{I}(\hat{S_{i}}\geq t,\hat{Y}_{i} \neq Y_{i})+1\}}{\frac{1}{n^{test}}\{\sum_{j \in D^{test}}\mathbb{I}(\hat{S}_{j}\geq t)\}\bigvee 1},
\end{equation}
where $\hat{Y}_{i}$ denotes the predicted class of sample $i$, $Y_{i}$ denotes the real class of sample $i$, and $$\hat{S_{i}} = \max\{\hat{S}^{1}(X_{i}),\hat{S}^{2}(X_{i}),...,\hat{S}^{K}(x_{i})\},$$ where $\hat{S}^{j}$ is the score of class $j$ from the black-box model. We choose the smallest $t$ so the estimated \mbox{FSP} is less than $\alpha$. Define
\begin{equation}\label{tau_K}
	 \tau = \hat{Q}^{-1}(\alpha) = \inf \{t: \hat{Q}(t)\leq \alpha\},
\end{equation}
and \begin{equation}\label{R_value_K}
    \hat{R}_{j} = \inf_{t\leq \hat{s}_{j}}\{\hat{Q}(t)\},
\end{equation} where $ \hat{s}_{j} = \hat{S}(X_{j})$. Therefore $$ \mathbb{I}(\hat{R}_{j} \leq \alpha) \Leftrightarrow \mathbb{I}(\hat{S}_{j}\leq \tau).$$ So, the decision rule can be written as: for a given $\alpha$, $$\delta^{j} =  \underset{i}{{\arg\max} \, \hat{S}^{i}_{j}} \cdot \mathbb{I}(\hat{R}_{j} \leq \alpha).  $$ 

\renewcommand{\algorithmicrequire}{\textbf{Input:}}
\renewcommand{\algorithmicensure}{\textbf{Output:}}

\begin{algorithm}
\caption{\mbox{FSR} control procedure on multi-class classification.}
\label{alg:3}
\begin{algorithmic}[1]
\REQUIRE Existing data $D$ and its real class, test data $D^{test}$. 
\ENSURE Classification result of test data.
\STATE Randomly split $D$ into $D^{train}$ and $D^{cal}$. 
\STATE Train a black-box model only on $D^{train}$ to get score function $\{S^{1}(x)$, $S^{2}(x), ...,S^{K}(x)\}$.
\STATE Predict base scores for all observations in $D^{test}$ and $D^{cal}$.
\STATE Compute the q-value for all test data using equation \eqref{decision_K}.
\STATE Compute the R-value for all test data using equation \eqref{R_value_K}.
\STATE Threshold the R-value at a user-specified level $\alpha$, assigning an observation in $ D^{test}$ to class $\hat{Y}_{i}$ if $\hat{R}_{i}\leq \alpha$, where $$\hat{Y}_{i} =  \underset{j}{{\arg\max} \, S^{j}_{i}} \cdot\mathbb{I}(\hat{R}_{i}\leq\alpha). $$
\STATE Return an indecision result on all remaining observations where $\hat{R}_{i} \geq \alpha$.
\end{algorithmic}
\end{algorithm}

\begin{theorem}\label{data_driven_K}
	We denote $W^{test}$/$W^{cal}$ as the samples whose not real class corresponds to the maximum in $\{S^{1},S^{2},...,S^{K}\}$ in the test/calibration data, denote $|W^{test}|/n^{test}$ as $ p_{test}$, $|W^{cal}|/n^{cal}$ as $ p_{cal}$. Define $\gamma = \mathbb{E}(p_{test}/p_{cal})$, then under the exchangeability and the condition there exist $ \boldsymbol{x_{0}} = (x_{1},...,x_{K})$ such that the rank of function matrix $F(\boldsymbol{x_{0}})$ is K, the Algorithm \ref{alg:3} with R-value formula \eqref{decision_K} can control \mbox{FSR} at $ \alpha $ precisely, where
 $$
 F(\boldsymbol{x}) = \left(
 \begin{matrix}
    F^{1}_{1}( \boldsymbol{x}_{(1)}) & F^{1}_{2}(\boldsymbol{x}_{(2)}) & ... & F^{1}_{K}(\boldsymbol{x}_{(K)})\\  
    F^{2}_{1}(\boldsymbol{x}_{(1)}) & F^{2}_{2}(\boldsymbol{x}_{(2)}) & ... & F^{2}_{K}(\boldsymbol{x}_{(K)})\\
    ... & ... & ... & ...\\
    F^{K}_{1}(\boldsymbol{x}_{(1)}) & F^{K}_{2}(\boldsymbol{x}_{(2)}) & ... & F^{K}_{K}(\boldsymbol{x}_{(K)})
  \end{matrix}
  \right).
$$ 
\end{theorem}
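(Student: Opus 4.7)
The plan is to mirror the binary-case proof of Theorem \ref{data_driven}, with the rank condition on $F(\boldsymbol{x_0})$ playing the role of the two-point identifiability used there. First, I would rewrite the \mbox{FSP} at the data-driven threshold $\tau$ defined by \eqref{tau_K} in the decomposed form
$$\mbox{FSP}(\tau)=\frac{V^{test}(\tau)}{R^{test}(\tau)\vee 1}=\hat{Q}(\tau)\cdot\frac{n^{cal}+1}{n^{test}}\cdot\frac{V^{test}(\tau)}{V^{cal}(\tau)+1},$$
where $V^{test}(t)$ and $V^{cal}(t)$ count misclassified test and calibration subjects with $\hat{S}_i\geq t$ and $R^{test}(t)$ counts selected test subjects. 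Since $\tau$ is the smallest threshold with $\hat{Q}(\tau)\leq\alpha$, the control problem reduces to bounding $\mathbb{E}[V^{test}(\tau)/(V^{cal}(\tau)+1)]$.

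Second, I would set up the same backward filtration used in the binary case: let $\mathcal{F}_k=\sigma\{\Delta(V^{test}(s_k),V^{cal}(s_k))\}_{s_k\leq\tau}$, where $s_k$ is the threshold at which the $k$-th misclassified subject in the pooled set $D^{cal}\cup D^{test}$ is included. Under the K-class version of Assumption \ref{assumption1}, the labels $(X_i,Y_i)$ are exchangeable across $D^{cal}\cup D^{test}$, and because $\hat{Y}_i$ depends on $X_i$ only through the trained black-box model, the K-class misclassification indicators $\mathbb{I}\{\hat{Y}_i\neq Y_i\}$ inherit this exchangeability. The same argument as in the binary proof, combining Lemma \ref{exchangeable}(ii) for the conditional expectations with Lemma \ref{chengji} for the product structure, then shows that $M_k:=V^{test}(s_k)/(V^{cal}(s_k)+1)$ is a martingale with respect to $\{\mathcal{F}_k\}$. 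The definition of $V^{test}$ only requires a binary indicator of misclassification, so the algebraic structure of the martingale identity is unchanged in moving from $K=2$ to general $K$.

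Third, since $\tau$ is a stopping time adapted to this filtration and $|M_k|\leq n^{test}$, optional stopping yields
$$\mathbb{E}\left[\frac{V^{test}(\tau)}{V^{cal}(\tau)+1}\right]=\mathbb{E}\left[\frac{|W^{test}|}{|W^{cal}|+1}\right]\leq\frac{n^{test}}{n^{cal}+1}\cdot\gamma,$$
with $\gamma=\mathbb{E}[p_{test}/p_{cal}]=1$ under exchangeability. Substituting back into the \mbox{FSP} decomposition gives $\mbox{FSR}\leq\alpha$.

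The main obstacle will be pinpointing the role of the rank condition on $F(\boldsymbol{x_0})$. In the binary case, the two-point condition $F_1^1(x_0)\neq F_1^2(x_0)$ at $x_0\neq x_1$ rules out the pathology where the class-conditional score distributions agree on a set of positive measure, guaranteeing that the misclassification process is non-trivial. For $K$ classes, the full-rank requirement on the matrix of conditional CDFs evaluated at a common $\boldsymbol{x_0}$ is the natural generalization: it prevents any linear dependence among the rows or columns that would render two classes indistinguishable at every threshold, and so guarantees that the mixture weights $\pi_1,\dots,\pi_K$ are identifiable from the $K$-tuple of score distributions. Showing precisely that this rank condition is sufficient to force $\gamma=1$ exactly, and to keep the martingale argument from degenerating when some $F_k^j$ coincide on subsets of the score simplex, is the step where the most care is needed.
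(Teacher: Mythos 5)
Your structural plan is essentially the paper's: decompose $\mbox{FSP}(\tau) = \hat{Q}(\tau)\cdot\frac{n^{cal}+1}{n^{test}}\cdot\frac{V^{test}(\tau)}{V^{cal}(\tau)+1}$, show $V^{test}(s_k)/(V^{cal}(s_k)+1)$ is a backward martingale for the filtration indexed by pooled misclassification times, invoke optional stopping to replace $\tau$ by $t_l$ so the ratio becomes $|W^{test}|/(|W^{cal}|+1)$, and thereby bound $\mbox{FSR}\leq\gamma\alpha$. Two small inaccuracies: the martingale step in the paper's $K$-class proof reuses the direct transition-probability calculation from Theorem \ref{data_driven}, not Lemmas \ref{exchangeable} and \ref{chengji} (those support Lemma \ref{piestimate} for the non-exchangeable case); and the optional-stopping identity should be stated conditionally on $(D^{cal}, D^{test})$ before taking the outer expectation.

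The genuine gap is the one you yourself flag at the end: you do not actually show that the rank condition on $F(\boldsymbol{x_0})$ forces $\gamma=1$. Everything up to $\mbox{FSR}\leq\gamma\alpha$ is standard, so this is precisely the step where the theorem's extra hypothesis earns its keep, and it cannot be deferred. The paper closes it as follows: write $\mathbb{E}(p^{test}) = 1 - \sum_{i=1}^{K}\mathbb{P}(\arg\max_j S^j(x)=i\mid c=i)\,\mathbb{P}(c=i)$, observe that each conditional probability $\mathbb{P}(\arg\max_j S^j(x)=i\mid c=i)$ is an integral over the class-conditional densities that does not depend on the mixture weights $\pi$, and then set up a linear system $F(\boldsymbol{x_0})\boldsymbol{\pi} = \boldsymbol{F}(\boldsymbol{x_0})$ from the observed marginal CDFs of the scores. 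Under exchangeability the observed marginals agree between $D^{cal}$ and $D^{test}$, so rank-$K$ of $F(\boldsymbol{x_0})$ plus Cramer's rule gives $\pi^{test}_i = \pi^{cal}_i$ for each $i$, hence $\mathbb{E}(p^{test}) = \mathbb{E}(p^{cal})$ and $\gamma = 1$. Your identifiability intuition is exactly right, but without this Cramer's-rule calculation the argument is incomplete and the claim of \emph{precise} control at $\alpha$ is not established.
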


\section{Proofs}\label{appB}

\begin{proof}[Proof of Lemma \ref{lemma1}]
	we denote $ \frac{p(x)}{\int_{t}^{1}p(x)dx}$ as $ P(x,t)$, therefore $ h(t) = \int_{t}^{1}P(x,t)dx = 1$, $ h(t)^{\prime} = 0$, so$$ h(t)^{\prime} = \int_{t}^{1}\frac{dP(x,t)}{dt}dx - P(t,t) = 0.$$
	And $ g(t) = \int_{t}^{1}P(x,t)f(x)dx$,
	\begin{align*}
			g(t)^{\prime} &= \int_{t}^{1}\frac{dP(x,t)}{dt}\cdot f(x)dx - P(t,t)f(t) \\
			&=\int_{t}^{1}\frac{dP(x,t)}{dt}\cdot f(x)dx - \int_{t}^{1}\frac{dP(x,t)}{dt}\cdot f(t)dx \\
			&=\int_{t}^{1}\frac{dP(x,t)}{dt}\cdot (f(x)-f(t))dx.
	\end{align*}
Because $f(x)$ is monotonically decreasing, $ f(x)-f(t) \leq 0 $ always holds. And \begin{equation}
		\frac{dP(x,t)}{dt} = -\frac{p(x)}{\{ \int_{t}^{1}p(x)dx\}^{2}}\cdot [-p(t)]
		= \frac{p(x)p(t)}{\{ \int_{t}^{1}p(x)dx\}^{2}} \geq 0.
\end{equation}
In summary, $g(t)^{\prime} \leq 0  $  is always established, so $g(t)$ is monotonically decreasing. With Assumption \ref{assumption1} holds, there is \begin{equation}
	\begin{split}
		 Q(t) = \frac{\mathbb{E}\{\sum_{j\in D^{test}}((1-S_{j})\mathbb{I}(S_{j} \geq t))\}}{\mathbb{E}(\sum_{j\in D^{test}}\mathbb{I}(S_{j}\geq t))} &= \frac{\sum_{j\in D^{test}}\int_{t}^{1}(1-x)p_{j}(x)dx}{\sum_{j\in D^{test}}\int_{t}^{1}p_{j}(x)dx} \\ &= \frac{\int_{t}^{1}(1-x)\sum_{j\in D^{test}}p_{j}(x)dx}{\int_{t}^{1}\sum_{j\in D^{test}}p_{j}(x)dx},
	\end{split}
\end{equation} where $ p_{j}(x)$ is the PDF of $S(x_{j})$, satisfy the condition of Lemma \ref*{lemma1}, so $Q(t)$ decreases monotonically. 
\begin{remark}
    The proof of Lemma \ref{lemma1} is from the perspective of function analysis, and it is noted that the proof process requires $p(x)$ to have continuity. But in fact, the density function of discrete random variables can only have countable first-type discontinuities, so the proof should also be true for discrete random variables.
\end{remark}

\end{proof}

\begin{proof}[Proof of Theorem \ref{optimaloracle}]
According to the definition, there is obviously $ P(Y\neq \hat{Y} | S\geq t) = P(Y\neq Y^{\prime} | S\geq t) $. where $$\hat{Y}_{j} = \underset{i}{{\arg\max} \, S_{j}^{i}} \cdot \mathbb{I}(S_{j} \geq t) , \ Y^{\prime}_{j} = \underset{i}{{\arg\max} \, S_{j}^{i}}.$$ So
\begin{align*}\label{eq_mfsr}
	    \mbox{mFSR}(t) &= P(Y\neq \hat{Y} | S\geq t) = P(Y\neq Y^{\prime} | S\geq t) \\ 
        &= \frac{P(Y \neq Y^{\prime}, S\geq t)}{P( S\geq t)} \\
		&= \frac{\mathbb{E} \{\sum_{i\in D^{test}}\mathbb{I}(Y_{i}\neq Y^{\prime}_{i}, S_{i}\geq t)\}}{\mathbb{E}\{\sum_{i\in D^{test}}\mathbb{I}(S_{i}\geq t)\}} \\
		&= \frac{\mathbb{E} \{\sum_{i\in D^{test}}\mathbb{I}(Y_{i}\neq Y^{\prime}_{i})\cdot \mathbb{I}(S_{i}\geq t)\}}{\mathbb{E}\{\sum_{i\in D^{test}}\mathbb{I}(S_{i}\geq t)\}} \\
		&= \frac{\mathbb{E}_{X} \{\sum_{i\in D^{test}} \mathbb{E}_{Y}\{\mathbb{I}(Y_{i}\neq Y^{\prime}_{i})\cdot \mathbb{I}(S_{i}\geq t) | x_i\}\}}{\mathbb{E}\{\sum_{i\in D^{test}}\mathbb{I}(S_{i}\geq t)\}}  \\
		&= \frac{\mathbb{E}_{X}\{  \sum_{i\in D^{test}} \mathbb{E}_{Y}\{\mathbb{I}(Y_{i}\neq Y^{\prime}_{i}) | x_i\} \cdot \mathbb{I}(S_{i}\geq t)\}} {\mathbb{E}\{\sum_{i\in D^{test}}\mathbb{I}(S_{i}\geq t)\}}  \\
		&= \frac{\mathbb{E} \{ \sum_{i\in D^{test}}(1-S_{i})\cdot \mathbb{I}(S_{i}\geq t)\}}{\mathbb{E}\{\sum_{i\in D^{test}}\mathbb{I}(S_{i}\geq t)\}}.
\end{align*}
From the formula above, we have:
$$\alpha \cdot \mathbb{E}\left\{\sum_{i\in D^{test}}\mathbb{I}(S_{i}\geq t)\} = \mathbb{E} \{ \sum_{i\in D^{test}}(1-S_{i})\cdot \mathbb{I}(S_{i}\geq t)\right\}.$$ 
Under the definition of $\delta$ and $S_{i} = \max\{S^{1}_{i},S^{2}_{i} \}$ \begin{equation}\label{lam0}
    \mathbb{E}\left \{\sum_{i\in D^{test}}(1-S_{i}-\alpha)\cdot\mathbb{I}(S_{i} \geq t)\right\} = 0,
\end{equation}
$$\mathbb{E} \left\{\sum_{i\in D^{test}}(1-S^{1}_{i}-\alpha)\cdot\mathbb{I}(\delta^{i} = 1) + (1-S^{2}_{i}-\alpha)\cdot\mathbb{I}(\delta^{i} = 2)\right\} = 0,$$ and by the equation $ S^{1} + S^{2} = 1 $,
\begin{equation}\label{eq3}
	\mathbb{E}\left\{\sum_{i \in D^{test}}(S^{2}_{i}-\alpha)\mathbb{I}(\delta^{i}_{OR} = 1) +(S^{1}_{i}-\alpha)\mathbb{I}(\delta^{i}_{OR}= 2)\right\} = 0.
\end{equation}
Let $\delta \in \{0,1,2\}^{m}$ be a general selection rule in $D_{\alpha}$. Then the \mbox{mFSR} constraints for $\delta$ imply that 
\begin{equation}\label{eq4}
	\mathbb{E}\left\{\sum_{i \in D^{test}}(S^{2}_{i}-\alpha)\mathbb{I}(\delta^{i} = 1) +(S^{1}_{i}-\alpha)\mathbb{I}(\delta^{i}= 2)\right\} \leq 0.
\end{equation}
The \mbox{ETS} of $\delta = \{\delta_{i}: n+1 \leq i \leq n+m\}$ is given by
\begin{equation}
	\mbox{ETS} = \mathbb{E} \left\{ \sum_{i \in D^{test}}S^{1}_{i}\mathbb{I}(\delta^{i} = 1) +S^{2}_{i}\mathbb{I}(\delta^{i}= 2) \right\}.
\end{equation}
According to equations \eqref{eq3} and \eqref{eq4}, we have :
\begin{equation}\label{eq6}
	\mathbb{E}\left\{\sum_{i \in D^{test}}(S^{2}_{i}-\alpha)\{\mathbb{I}(\delta^{i}_{OR} = 1)-\mathbb{I}(\delta^{i} = 1)\} +(S^{1}_{i}-\alpha)\{\mathbb{I}(\delta^{i}_{OR}= 2)-\mathbb{I}(\delta^{i}= 2)\}\right\} \geq 0.
\end{equation} 
Let $\lambda = (1-Q^{-1}(\alpha)-\alpha)/Q^{-1}(\alpha)$. From equation \eqref{lam0}, we have $ t \leq 1-\alpha$, then $ Q^{-1}(\alpha) \leq 1- \alpha $, $\lambda \geq 0 $. 
So the oracle rule can be written as $$\delta^{i}_{OR} =1\cdot \mathbb{I}\left\{\frac{S^{2}_{i}-\alpha}{S^{1}_{i}}<\lambda\right\} + 2 \cdot\mathbb{I}\left\{\frac{S^{1}_{i}-\alpha}{S^{2}_{i}}<\lambda\right\},$$ due to the monotonicity of the function$$ f(x) = \frac{x-\alpha}{1- x}.$$ Using the expression and techniques similar to the Neyman-Pearson Lemma, when $\mathbb{I}(\delta^{i}_{OR}) = 1$, then $S_{i}^{2} - \alpha -\lambda S_{i}^{1} < 0$ and $\mathbb{I}(\delta^{i}_{OR}) - \mathbb{I}(\delta^{i}) \geq 0 $, and vice versa.  So we claim that the following result holds:
$$\{\mathbb{I}(\delta^{i}_{OR}= 1)-\mathbb{I}(\delta^{i}= 1)\}\cdot(S^{2}_{i}-\alpha -\lambda S^{1}_{i}) \leq 0,$$
and
$$\{\mathbb{I}(\delta^{i}_{OR}= 2)-\mathbb{I}(\delta^{i}= 2)\}\cdot(S^{1}_{i}-\alpha- \lambda S^{2}_{i}) \leq 0.$$
It follows that:
\begin{equation}\label{eq7}
	\mathbb{E}\left\{\sum_{i\in D^{test}}[\{\mathbb{I}(\delta^{i}_{OR}= 1)-\mathbb{I}(\delta^{i}= 1)\}(S^{2}_{i}-\alpha-\lambda S^{1}_{i}) + \{\mathbb{I}(\delta^{i}_{OR}= 2)-\mathbb{I}(\delta^{i}= 2)\}(S^{1}_{i}-\alpha-\lambda S^{2}_{i})] \right\} \leq 0.
\end{equation}
According to equation \eqref{eq6} and \eqref{eq7}, we have \begin{equation}
	\begin{split}
		&\lambda_{OR}(\mbox{ETS}_{\delta_{OR}}- \mbox{ETS}_{\delta}) \\
		&= \lambda_{OR} \sum_{i \in D^{test}}\{\mathbb{I}(\delta^{i}_{OR}= 1)-\mathbb{I}(\delta^{i}= 1)\}S^{1}_{i}+ \{\mathbb{I}(\delta^{i}_{OR}= 2)-\mathbb{I}(\delta^{i}= 2)\}S^{2}_{i}\geq 0.
	\end{split}
\end{equation}The proof is complete.
\end{proof}

\begin{proof}[Proof of Theorem \ref{data_driven}]
Let $$ V^{test}(t) = \sum_{j\in D^{test}}\mathbb{I}(\hat{S}_{j}\geq t,Y_{j}\neq \hat{Y}_{j}), \ R^{test}(t) = \sum_{j\in D^{test}}\mathbb{I}(\hat{S}_{j}\geq t),$$and corresponding quantities in $ D^{cal}$ $$ V^{cal}(t) = \sum_{j\in D^{cal}}\mathbb{I}(\hat{S}_{j}\geq t, \ Y_{j}\neq \hat{Y}_{j}), R^{cal}(t) = \sum_{j\in D^{cal}}\mathbb{I}(\hat{S}_{j}\geq t).$$ The \mbox{FSP} of the proposed algorithm is given by$$ \mbox{FSP}(\tau) = \frac{V^{test}(\tau)}{R^{test}(\tau)\bigvee 1}.$$We expand the expression above

\begin{equation}
	\begin{split}
		\mbox{FSP}(\tau) &= \frac{V^{test}(\tau)}{V^{cal}(\tau)+1}\cdot \frac{V^{cal}(\tau)+1}{R^{test}(\tau)\bigvee 1} \\
		&= \hat{Q}(\tau)\cdot \frac{n^{cal}+1}{n^{test}} \cdot \frac{V^{test}(\tau)}{V^{cal}(\tau)+1} \\
		&\leq \alpha \cdot \frac{n^{cal}+1}{n^{test}} \cdot \frac{V^{test}(\tau)}{V^{cal}(\tau)+1}.
	\end{split}
\end{equation}

A key step to establish the \mbox{FSR} control, i.e. $\mathbb{E}\{\mbox{FSP}(\tau)\}\leq \alpha$, is to show that the ratio
\begin{equation}\label{mart}
	\frac{V^{test}(t)}{V^{cal}(t) + 1},
\end{equation}
is a backward martingale. In our proof, we force the following discrete-time filtration that describes the misclassification process:$$ F_{k} = \{ \Delta(V^{test}(s_{k}),V^{cal}(s_{k}))\}_{t_{l}\leq s_{k}\leq t},$$
where $s_{k}$ corresponds to the threshold(time) when exactly k subjects, combining the subjects in both $ D^{cal}$ and $ D^{test}$, are mistakenly classified, whose real class corresponds to a lower score in $\{S^{1}, S^{2}\}$. So $ V^{test}(t_{l})$ is $ W^{test}$, and $ V^{cal}(t_{l})$ is $ W^{cal}$. Note that at time $s_{k}$, only one of the two following events is possible.

\begin{equation}
	\begin{split}
		A_{1} = \mathbb{I}\{V^{test}(s_{k-1}) = V^{test}(s_{k}), and \ V^{cal}(s_{k-1}) = V^{cal}(s_{k}) - 1\},
		\\
		A_{2} = \mathbb{I}\{V^{test}(s_{k-1}) = V^{test}(s_{k}) -1, and\ V^{cal}(s_{k-1}) = V^{cal}(s_{k})\}.
	\end{split}
\end{equation}
According to Assumption \ref{assumption1}, which claims that  test data and calibration data are exchangeable, and the fact that FASI uses the same fitted model to compute the scores, we have$$ \mathbb{P}(A_{1}|F_{k}) = \frac{V^{cal}(s_{k})}{V^{test}(s_{k}) + V^{cal}(s_{k})}; \ \mathbb{P}(A_{2}|F_{k}) =  \frac{V^{test}(s_{k})}{V^{test}(s_{k}) + V^{cal}(s_{k})},$$
and note that:
\begin{equation}
	\begin{split}
		&\mathbb{E}\{\frac{V^{test}(s_{k-1})}{V^{cal}(s_{k-1}) + 1}|F_{k}\} \\
		&=\frac{V^{test}(s_{k})}{V^{cal}(s_{k})}\cdot \frac{V^{cal}(s_{k})}{V^{test}(s_{k}) + V^{cal}(s_{k})} + \frac{V^{test}(s_{k})-1}{V^{cal}(s_{k})+1} \cdot \frac{V^{test}(s_{k})}{V^{test}(s_{k}) + V^{cal}(s_{k})}\\
		&=\frac{V^{test}(s_{k})}{V^{cal}(s_{k}) + 1}.
	\end{split}
\end{equation}
Therefore, the expression \eqref{mart} is a martingale. The threshold $\tau$ defined by expression \eqref{tau} is a stopping time with respect to the filtration$\ F_{k} \ $since $ \{\tau \leq s_{k}\} \in F_{k}$. In other words, whether the $k$th misclassification occurs completely depends on the information prior to time $s_{k}$ (including $s_{k}$). And we make the following statement.

\begin{itemize}
	\item When $ t_{l}$ is used, all subjects are classified under the rule $\delta = {\arg\max}_{i} \, S^{i}.$
	\item The sizes of the testing and calibration sets are random. The expectation is taken in steps, first conditional on fixed sample sizes and then taken over all possible sample sizes.
\end{itemize}
From the definition of $ p_{test} $, we know:
\begin{equation}
	\mathbb{E}(p_{test}) = \mathbb{E}(|W^{test}|/n^{test}) = \mathbb{E}\left\{ \frac{\sum_{i=1}^{n^{test}}\{\mathbb{I}(S^{1}(x_{i})\geq S^{2}(x_{i}),c = 2) + \mathbb{I}(S^{1}(x_{i}) < S^{2}(x_{i}),c = 1)\}}{n^{test}} \right\}.
\end{equation}
In summary:
\begin{equation}
	\begin{split}
		\mbox{FSR} &= \mathbb{E}\{\mbox{FSP}(\tau)\} \\
		&\leq \alpha \cdot \mathbb{E}[\frac{n^{cal} + 1}{n^{test}}\cdot  \mathbb{E}\{\frac{V^{test}(\tau)}{V^{cal}(\tau) + 1}| D^{cal},D^{test}\}] \\ 
		&=\alpha\cdot \mathbb{E}[\frac{n^{cal} + 1}{n^{test}}\cdot  \mathbb{E}\{\frac{V^{test}(t_{l})}{V^{cal}(t_{l}) + 1}| D^{cal},D^{test}\}] \\ 
		&=\alpha\cdot \mathbb{E}[\frac{n^{cal} + 1}{n^{test}}\cdot \frac{| W^{test}|}{| W^{cal}| + 1}] \\ 
		&=\alpha\cdot \mathbb{E}[\frac{n^{cal} + 1}{| W^{cal}| + 1}]\cdot \mathbb{E}[ \frac{| W^{test}|}{n^{test}}] \\
		&\leq\alpha\cdot\mathbb{E}\{\frac{p_{test}}{p_{cal}}\} \coloneqq \gamma\alpha.
	\end{split}
\end{equation}
Here, we control the \mbox{FSR} at $\alpha\cdot\mathbb{E}\{p_{test}/p_{cal}\}$. From equation \eqref{p_test} and the data are exchangeable, treating $n^{test}$ as a fixed constant, and according to the law of large numbers, the estimation of $ p_{test}$ converges to the following formula with probability 1.
\begin{equation}
	\begin{split}
		\mathbb{E}(p_{test}) = &\mathbb{P}(S^{1}(x_{i})\geq 0.5, c = 2) + \mathbb{P}(S^{1}(x_{i})< 0.5, c = 1) \\
		= & (1-\pi_{test})\cdot(1 - F^{1}_{2}(0.5)) + \pi_{test} \cdot F^{1}_{1}(0.5).
	\end{split}
\end{equation}So as the $ p_{cal}$. Notice that when test data and calibration data are exchangeable, the distribution of test data and calibration are the same. From the condition there exist $x_{0}, x_{1}$, s.t. $F_{1}^{1}(x_{0})\neq F_{1}^{2}(x_{0}) $, $ F_{1}^{1}(x_{1})\neq F_{2}^{1}(x_{1}) $, we can conclude that $\pi_{test} = \pi_{cal} $, so $ p_{cal} = p_{test}$,  $\gamma = 1$. The proof is complete.
\end{proof}

\begin{proof}[Proof of Lemma \ref{lowerbound}]
Notice that $$\mathbb{E}(p^{test}) =  1 -  F^{1}_{2}(0.5) + \pi_{test}\cdot( F^{1}_{2}(0.5) - 1 + F^{1}_{1}(0.5)),$$
and Storey's estimator result in $\hat{\pi}_{test}(\lambda) \geq \pi_{test}$ for any $\lambda$. From the definition of $\hat{\pi}$, when $$\ F^{1}_{2}(0.5) + F^{1}_{1}(0.5) -1 \geq 0,$$ $\hat{\pi}(\lambda) \geq \pi $ and when $$ \ F^{1}_{2}(0.5) + F^{1}_{1}(0.5) -1 < 0,$$ $\hat{\pi}(\lambda) \leq \pi $ for any $\lambda$. With Assumption \ref{assumption1} hold, \begin{align*}
        \mathbb{E}\{ \hat{p}_{test}\} = & \mathbb{E}\{ 1 -  \hat{F}^{1}_{2}(0.5) + \hat{\pi}_{test}\cdot( \hat{F}^{1}_{2}(0.5) - 1 + \hat{F}^{1}_{1}(0.5))\} \\
        = & \mathbb{E}_{D^{test}} \{\mathbb{E}_{D^{cal}} \{1 -  \hat{F}^{1}_{2}(0.5) + \hat{\pi}_{test}\cdot( \hat{F}^{1}_{2}(0.5) - 1 + \hat{F}^{1}_{1}(0.5))\}\} \\
        = & \mathbb{E}_{D^{test}} \{ 1 -  F^{1}_{2}(0.5) + \hat{\pi}_{test}\cdot( F^{1}_{2}(0.5) - 1 + F^{1}_{1}(0.5))\} \\
        = & 1 -  F^{1}_{2}(0.5) +  \mathbb{E}_{D^{test}}\{\hat{\pi}_{test}\}\cdot( F^{1}_{2}(0.5) - 1 + F^{1}_{1}(0.5)) \\
        \geq & 1 -  F^{1}_{2}(0.5) +  \pi_{test} \cdot( F^{1}_{2}(0.5) - 1 + F^{1}_{1}(0.5)) \\
        = & \mathbb{E}(p_{test}).
\end{align*}
\end{proof}

\begin{proof}[Proof of Lemma \ref{exchangeable}]
    The first conclusion is from article \cite{ref23}, and we will prove the second conclusion. From exchangeability, $\mathbb{E}[X|\mathcal{G}]$, where its $s_{k}$ is independent of $X$ is equal to $\mathbb{E}[X|\mathcal{G}]$, where its $s_{n}$ is independent of $X$.
    \begin{equation}
        \begin{split}
            \mathbb{E}[X|\mathcal{G}] = &\mathbb{E}[X|\sigma(s_k,s_{k+1},...,s_n)] \\
            =& \mathbb{E}[X|\sigma(s_k,s_{k+1},...,s_{n-1}), s_{n-1}\leq s_{n}] \\
            =& \mathbb{E}[X|\sigma(s_k,s_{k+1},...,s_{n-1})] \\ 
            =&  \mathbb{E}(X|\mathcal{H}).
        \end{split}
    \end{equation}
\end{proof}

\begin{proof}[Proof of Lemma \ref{chengji}]
    \begin{equation}
        \begin{split}
            & | \mathbb{E}\{X_{n}\cdot Y_{n} | \sigma(s_{j})_{j\leq n-1} \} - X_{n-1}\cdot Y_{n-1} | \\
            = &| \mathbb{E}\{X_{n}\cdot Y_{n}- X_{n-1}\cdot Y_{n-1}  | \sigma(s_{j})_{j\leq n-1} \} |\\
            = & |\mathbb{E}\{ (X_{n} - X_{n-1}) \cdot Y_{n} + X_{n-1}\cdot (Y_{n} - Y_{n-1}) | \sigma(s_{j})_{j\leq n-1} \} |\\
            \leq & |\mathbb{E}\{ (X_{n} - X_{n-1}) \cdot Y_{n} | \sigma(s_{j})_{j\leq n-1} \} | + |\mathbb{E}\{ X_{n-1}\cdot (Y_{n} - Y_{n-1}) | \sigma(s_{j})_{j\leq n-1} \} |\\
            \leq & M \cdot |\mathbb{E}\{ X_{n} - X_{n-1} | \sigma(s_{j})_{j\leq n-1}\}| + M\cdot | \mathbb{E}\{ Y_{n} - Y_{n-1} | \sigma(s_{j})_{j\leq n-1} \} | \\
            = & 0.
        \end{split}
    \end{equation}Therefore $(X_{n}\cdot Y_{n}, \sigma(s_{j})_{ j\leq n}, n\geq 0)$ is also a martingale.
\end{proof}

\begin{proof}[Proof of Lemma \ref{piestimate}]
    For process \begin{equation}\label{martingaleW}
        \frac{V_{1}^{test}(\tilde \tau)}{V_{1}^{cal}(\tilde \tau) + 1},
    \end{equation}only one of the two following events are possible\begin{equation}
    \begin{split}
        A_{1} &= \mathbb{I}\{V_{1}^{test}(r^{1}_{k-1}) = V^{test}(r^{1}_{k}) - 1, \ and \ V_{1}^{cal}(r^{1}_{k-1}) = V_{1}^{cal}(r^{1}_{k})   \  \},
        \\
        A_{2} &= \mathbb{I}\{V_{1}^{test}(r^{1}_{k-1}) = V_{1}^{test}(r^{1}_{k}), \ and \ V_{1}^{cal}(r^{1}_{k-1}) = V_{1}^{cal}(r^{1}_{k}) - 1 \},
    \end{split}
\end{equation} where $r^{1}_{k}$ corresponds to the threshold (time) when exactly k subjects in $D^{test}$ and $D^{cal}$ belong to class 1 but are misclassified to class 2. From the Assumption \ref{assumption3}, we have$$ \mathbb{P}(A_{1}|F_{k}) = \frac{V^{test}_{1}(r^{1}_{k})}{V_{1}^{test}(r^{1}_{k}) + V_{1}^{cal}(r^{1}_{k})}; \ \mathbb{P}(A_{2}|F_{k}) =  \frac{V^{cal}_{1}(r^{1}_{k})}{V_{1}^{test}(r^{1}_{k}) + V_{1}^{cal}(r^{1}_{k})}.$$
So \begin{equation}
	\begin{split}
		&\mathbb{E}\{\frac{V_{1}^{test}(r^{1}_{k-1})}{V_{1}^{cal}(r^{1}_{k-1}) + 1}|F_{k}\} \\
		& = \frac{V_{1}^{test}(r^{1}_{k}) - 1 }{V_{1}^{cal}(r^{1}_{k}) + 1} \cdot \frac{V^{test}_{1}(r^{1}_{k})}{V_{1}^{test}(r^{1}_{k}) + V_{1}^{cal}(r^{1}_{k})} + \frac{V^{test}_{1}(r^{1}_{k}) }{V_{1}^{cal}(r^{1}_{k})} \cdot \frac{V^{cal}_{1}(r^{1}_{k})}{V_{1}^{test}(r^{1}_{k}) + V_{1}^{cal}(r^{1}_{k})} \\
            &=\frac{V_{1}^{test}(r^{1}_{k})}{V_{1}^{cal}(r^{1}_{k}) + 1}.
	\end{split}
\end{equation}
Therefore, process \eqref{martingaleW} is a martingale. And process\begin{equation}\label{martingaleW1}
    \frac{V^{test}_{2}(\tilde \tau)  }{ V^{cal}_{2}(\tilde \tau) + 1} 
\end{equation} is also a martingale, and the proof is similar to the above.

\begin{itemize}
    \item The filtration of martingale \eqref{martingaleW} is $ F^{1}_{k} = \{ \sigma (s_{j})_{k \leq j \leq n} \}$ where $s_{k}$ corresponds to the threshold (time) when exactly k subjects, combining the subjects in  $D^{test}$ and $D^{cal}$, belong to class 2 but are misclassified to class 1.
    \item The filtration of martingale \eqref{martingaleW1} is $ F^{2}_{k} = \{ \sigma (s_{j})_{k \leq j \leq n} \}$ where $s_{k}$ corresponds to the threshold (time) when exactly k subjects, combining the subjects in  $D^{test} \cup D^{cal}$, which belong to class 1 but are misclassified to class 2. 
\end{itemize}
By Lemma \ref{exchangeable}, Lemma \ref{chengji}, and the dependence between  class 1 and class 2, the proof is completed.

\end{proof}

\begin{proof}[Proof of Theorem \ref{datadrivenpidifferent}]
The binary classification \mbox{FSR} can be work out:

\begin{align*}
        \mbox{FSR} &= \mathbb{E}(\mbox{FSP}) \\
    &= \mathbb{E} \left\{ \tilde Q(\tilde \tau)\cdot \frac{n^{cal}+1}{n^{test}} \cdot \frac{V^{test}(\tilde \tau)}{V^{cal}(\tilde \tau)+1} \right\} \\
    &\leq \alpha\cdot \mathbb{E} \left\{ \frac{n^{cal}+1}{n^{test}} \cdot \frac{\hat{p}_{cal}}{\hat{p}_{test}} \cdot\frac{V^{test}(\tilde \tau)}{V^{cal}(\tilde \tau)+1} \right\}   \\
    &\leq \alpha\cdot \mathbb{E} \left\{ \frac{n^{cal}+1}{n^{test}}\cdot \frac{\hat{p}_{cal}}{\hat{p}_{test}} \cdot \left( \frac{V^{test}_{1}(\tilde \tau)}{V^{cal}_{1}(\tilde \tau)+1} + \frac{V^{test}_{2}(\tilde \tau)}{V^{cal}_{2}(\tilde \tau)+1}\right)\right\}   \\
    &= \alpha\cdot \mathbb{E} \left\{ \frac{n^{cal}+1}{n^{test}}\cdot \frac{\hat{p}_{cal}}{\hat{p}_{test}} \cdot \mathbb{E}\left\{ \frac{V^{test}_{1}(t_{l})}{V^{cal}_{1}(t_{l})+1} + \frac{V^{test}_{2}(t_{l})}{V^{cal}_{2}(t_{l})+1}|D^{cal}, D^{test}\right\}\right\} \\
    &= \alpha\cdot \mathbb{E} \left\{ \frac{n^{cal}+1}{n^{test}}\cdot \frac{\hat{p}_{cal}}{\hat{p}_{test}} \cdot  \left(\frac{W^{test}_{1}}{W^{cal}_{1}+1} + \frac{W^{test}_{2}}{W^{cal}_{2}+1}\right) \right\} \\
    &\leq \alpha\cdot \mathbb{E}\left\{ \frac{\hat{p}_{cal}}{\hat{p}_{test}} \cdot \left( \frac{W^{test}_1}{n^{test}}\left/\frac{W^{cal}_1}{n^{cal}} \right) + \frac{\hat{p}_{cal}}{\hat{p}_{test}} \cdot\left( \frac{W^{test}_2}{n^{test}}\right/\frac{W^{cal}_2}{n^{cal}}\right)   \right\} \\
    &\leq \alpha\cdot \frac{\mathbb{E}(p_{cal})}{\mathbb{E}(p_{test})} \cdot\left( \frac{\pi_{test}}{\pi_{cal}} + \frac{1 - \pi_{test}}{1-\pi_{cal}}\right),
\end{align*}
where $W_{i}^{test}$ is the number of samples classified to class 1 with no indecision choices.
\begin{remark}
    When $\pi$ is extremely large or small, this inequality gives a large upper bound. So, we reconsider the exchangeability within each class and give a new q-value formula:
\begin{equation}\label{decision_1}
	\hat{Q}_{1}(t) = \frac{\frac{1}{n^{cal}+1}\{\sum_{i\in D^{cal}}\mathbb{I}(Y_{i} = 2, \hat{Y}_{i} = 1)+1\}}{\frac{1}{n^{test}}\{\sum_{n+j \in D^{test}}\mathbb{I}(\hat{Y}_{i} = 1)\}\bigvee 1},
\end{equation}
\begin{equation}\label{decision_2}
	\hat{Q}_{2}(t) = \frac{\frac{1}{n^{cal}+1}\{\sum_{i\in D^{cal}}\mathbb{I}(Y_{i} = 1, \hat{Y}_{i} = 2)+1\}}{\frac{1}{n^{test}}\{\sum_{n+j \in D^{test}}\mathbb{I}(\hat{Y}_{i} = 2)\}\bigvee 1},
\end{equation}
where $\hat{Y}_{i}$ denotes the predicted class of sample $i$, $Y_{i}$ denotes the real class of sample $i$. The decision rule is  just like FASI $$ \delta^{j}(t_{1}, t_{2}) =  1 \cdot \mathbb{I}(\hat{S}^{1}_{j} \geq t_{1}) + 2 \cdot \mathbb{I}(\hat{S}^{2}_{j} \geq t_{2}).$$ We choose the smallest $t$ so the estimated \mbox{FSP} is less than $\alpha$. Define
\begin{equation}
	 \tau_{i} = \hat{Q}_{i}^{-1}(\alpha) = \inf \{t: \hat{Q}_{i}(t)\leq \alpha_{i}\},
\end{equation}
where $ \alpha_{1} = \alpha\cdot \pi_{cal}$ and $ \alpha_{2} = \alpha\cdot (1-\pi_{cal})$. 
So, equivalently, we can change the q-value formula to 
\begin{equation}
    \begin{split}
        \tilde{Q}_{1}(t) &= \frac{\frac{1}{n^{cal}+1}\{\sum_{i\in D^{cal}}\mathbb{I}(Y_{i} = 2, \hat{Y}_{i} = 1)+1\}}{\{\frac{1}{n^{test}}(\sum_{n+j \in D^{test}}\mathbb{I}(\hat{Y}_{i} = 1))\bigvee 1\} \cdot \hat{\pi}_{1}^{cal}} \\
        &= \frac{\{\sum_{i\in D^{cal}}\mathbb{I}(Y_{i} = 2, \hat{Y}_{i} = 1)+1\}}{\{\frac{1}{n^{test}}(\sum_{n+j \in D^{test}}\mathbb{I}(\hat{Y}_{i} = 1))\bigvee 1\} \cdot (\sum_{i \in D^{cal}}\mathbb{I}(Y_{i} = 1))},
    \end{split}
\end{equation}
where $ \hat{\pi}_{i}^{cal} $ is the proportion of class i in test data. Define \begin{equation}
    \hat{R}^{i}_{n+j} = \inf_{t\leq \hat{s}^{i}}\{\hat{Q}_{i}(t)\} \ \ and \ \  \tilde{R}^{i}_{n+j} = \inf_{t\leq \hat{s}^{i}}\{\tilde{Q}_{i}(t)\},
\end{equation} where $ \hat{s}^{i} = \hat{S}^{i}(X_{n+j}), \ i = 1,2 $. Therefore $$ \mathbb{I}(\tilde{R}^{i}_{n+j} \leq \alpha) \Leftrightarrow  \mathbb{I}(\hat{R}^{i}_{n+j} \leq \alpha_{i}) \Leftrightarrow \mathbb{I}(\hat{S}^{i}_{n+j}\leq \tau_{i}).$$
The \mbox{FSR} can be work out:
\begin{align*}
        \mbox{FSR} &= \mathbb{E}(\mbox{FSP}) \\
        &= \mathbb{E}\left\{ \frac{V^{test}(\tau)}{R^{test}(\tau)} \right\} \\
        &= \mathbb{E}\left\{ \frac{V_{1}^{test}(\tau)}{R^{test}(\tau)} + \frac{V_{2}^{test}(\tau)}{R^{test}(\tau)} \right\} \\
        &= \mathbb{E}\left\{ \frac{V_{1}^{cal}(\tau) + 1}{R^{test}(\tau)}\cdot \frac{V_{1}^{test}(\tau)}{V_{1}^{cal}(\tau) + 1}+ \frac{V_{2}^{cal}(\tau) + 1}{R^{test}(\tau)}\cdot \frac{V_{2}^{test}(\tau)}{V_{2}^{cal}(\tau) + 1} \right\} \\
        &= \mathbb{E}\left\{ \frac{V_{1}^{cal}(\tau) + 1}{R^{test}(\tau)}\cdot \frac{V_{1}^{test}(t_{l})}{V_{1}^{cal}(t_{l}) + 1} + \frac{V_{2}^{cal}(\tau) + 1}{R^{test}(\tau)}\cdot \frac{V_{2}^{test}(t_{l})}{V_{2}^{cal}(t_{l}) + 1} \right\} \\
        & \leq \mathbb{E}\left\{ \alpha_{1}\cdot \frac{n^{cal} + 1}{n^{test}}\cdot \frac{V_{1}^{test}(t_{l})}{V_{1}^{cal}(t_{l}) + 1} + \alpha_{2}\cdot \frac{n^{cal} + 1}{n^{test}}\cdot \frac{V_{2}^{test}(t_{l})}{V_{2}^{cal}(t_{l}) + 1} \right\}\\
        & \leq  \alpha_{1} \cdot  \frac{\mathbb{P}_{test}(\hat{Y} = 1,Y = 2)}{\mathbb{P}_{cal}(\hat{Y} = 1,Y = 2) } + \alpha_{2}\cdot \frac{\mathbb{P}_{test}(\hat{Y} = 2, Y = 1)}{\mathbb{P}_{cal}(\hat{Y} = 2, Y = 1) } \\
        & = \alpha_{1}\cdot \frac{\mathbb{P}_{test}(\hat{Y} = 1 | Y = 2) \cdot \mathbb{P}_{test}(Y = 2)}{\mathbb{P}_{cal}(\hat{Y} = 1 | Y = 2) \cdot \mathbb{P}_{test}(Y = 2) } + \alpha_{2} \cdot \frac{\mathbb{P}_{test}(\hat{Y} = 2| Y = 1) \cdot \mathbb{P}_{test}(Y = 1)}{ \mathbb{P}_{cal}(\hat{Y} = 2| Y = 1) \cdot \mathbb{P}_{test}(Y = 1) } \\
        & = \alpha_{1}\cdot \frac{1 - \pi_{test}}{1 - \pi_{cal}} + \alpha_{2} \cdot \frac{\pi_{test}}{\pi_{cal}} \\ 
        &=\pi_{test}\cdot \alpha + (1-\pi_{test})\cdot \alpha = \alpha
\end{align*}
However, setting different thresholds for different classes goes against our original intention, so here, we only give a way to control \mbox{FSR} for different $\pi$ without going too deep. This method is essentially conservative, and from the previous proof of the oracle situation, we know this data-driven procedure is not optimal. In the following research work, we will continue to improve our q-value method to maximize power as much as possible.
\end{remark}

\end{proof}

\begin{proof}[Proof of Corollary \ref{weightmartingale}]
	In our proof, note that at time $s_{k}$, only one of the two following events is possible:
	\begin{equation}
		\begin{split}
			A_{1} = \mathbb{I}\{R^{test}(s_{k-1}) = R^{test}(s_{k}), and \ R^{cal}(s_{k-1}) = R^{cal}(s_{k}) - 1\},
			\\
			A_{2} = \mathbb{I}\{R^{test}(s_{k-1}) = R^{test}(s_{k}) -1, and\ R^{cal}(s_{k-1}) = R^{cal}(s_{k})\}.
		\end{split}
	\end{equation}
	According to the assumption claims that test data and calibration data are exchangeable and the fact that FASI uses the same fitted model to compute the scores, we have$$ \mathbb{P}(A_{1}|F_{k}) = \frac{R^{cal}(s_{k})}{R^{test}(s_{k}) + R^{cal}(s_{k})}; \ \mathbb{P}(A_{2}|F_{k}) =  \frac{R^{test}(s_{k})}{R^{test}(s_{k}) + R^{cal}(s_{k})},$$
	and note that:
	\begin{align*}
			&\mathbb{E}\{\frac{R^{cal}(s_{k-1})+K\cdot R^{test}(s_{k-1}) + K}{R^{test}(s_{k-1} )+ 1 }|F_{k}\} \\
			&=\frac{R^{cal}(s_{k})+K\cdot R^{test}(s_{k}) + K- 1}{R^{test}(s_{k} )+ 1 } \cdot \frac{R^{cal}(s_{k})}{R^{test}(s_{k}) + R^{cal}(s_{k})} +  \frac{R^{cal}(s_{k})+K\cdot R^{test}(s_{k})}{R^{test}(s_{k} ) }\\ &\cdot \frac{R^{test}(s_{k})}{R^{test}(s_{k}) + R^{cal}(s_{k})} \\
			&=\frac{R^{cal}(s_{k})+K \cdot R^{test}(s_{k}) + K}{R^{test}(s_{k} )+ 1 }.
	\end{align*}
	Therefore, expression 3.9 is a martingale.
\end{proof}

\begin{proof}[Proof of Theorem \ref{datadrivenweighted}]
	From the definition of $ Q(t)$ above, we can know that
	$$\begin{aligned}
			\mbox{FSP}(\tau) & =\frac{V^{test}(\tau)}{V^{cal}(\tau)+1} \cdot \frac{V^{cal}(\tau)+1}{R^{cal}(\tau)+K \cdot R^{test}(\tau)+K} \cdot \frac{R^{cal}(\tau)+K \cdot R^{test}(\tau)+K}{ R^{test}(\tau)+1} \\
			& \leq \alpha \cdot \frac{n^{cal}+1}{n^{cal}+ K \cdot n^{\text {test}}+ K } \cdot \frac{V^{test}(\tau)}{V^{cal}(\tau)+1} \cdot \frac{R^{cal}(\tau)+K \cdot R^{test}(\tau)+K}{R^{test}(\tau)+1}.
		\end{aligned}$$
	Therefore
$$\begin{aligned}
	\mbox{FSR} & =\mathbb{E}\left\{\mbox{FSP}(\tau)\right\} \\
	& \leq \alpha \mathbb{E}\left[\frac{\left|D^{cal}\right|+1}{\left|D^{cal}\right|+\left|D^{test}\right|+1} \cdot \frac{R^{cal}(\tau)+K \cdot R^{test}(\tau)+ K }{R^{test}(\tau)+1} \cdot \mathbb{E}\left\{\frac{V^{test}(\tau)}{V^{cal}(\tau)+1} \mid D^{cal}, D^{test}\right\}\right].
\end{aligned}$$
From the prove of Theorem \ref{data_driven}, we have:
$$
\mathbb{E}\left\{\frac{V^{test}(\tau)}{V^{cal}(\tau)+1} \mid D^{cal}, D^{test}\right\}=\frac{V^{test}\left(t_l\right)}{V^{cal}(t)+1}=\frac{\left|W^{test}\right|}{\left|W^{cal}\right|+1},
$$ 

$$
\begin{aligned}
	\mathbb{E}\left\{\frac{R^{cal}(\tau)+K\cdot R^{test}(\tau)+K}{R^{test}(\tau)+1}\right\} & =\mathbb{E}\left\{\frac{R^{cal}\left(t_l\right)+K \cdot R^{test}\left(t_l\right)+1}{R^{test}\left(t_l\right)+1}\right\} \\ 
	& =\mathbb{E}\left\{\frac{\left|D^{cal}\right|+K\cdot \left|D^{test}\right|+ K }{\left|D^{test}\right|+1}\right\}.
\end{aligned}$$
Combining the above results, FSR can be controlled:
$$
\begin{aligned}
	\mbox{FSR} & \leq \alpha\cdot \mathbb{E}\left[\frac{\left|D^{cal}\right|+1}{\left|D^{c a l}\right|+K \cdot \left|D^{test}\right|+K} \cdot \frac{\left|D^{cal}\right|+K\cdot \left|D^{test}\right|+K}{\left|D^{test}\right|+1} \cdot \frac{\left|W^{test}\right|}{\left|W^{cal}\right|+1}\right] \\
	& =\alpha \cdot \mathbb{E}\left\{\frac{\left|D^{cal}\right|+1}{\left|W^{cal}\right|+1}\right\} \mathbb{E}\left\{\frac{\left|W^{test}\right|}{\left|D^{test}\right|}\right\} \leq \gamma \alpha.
\end{aligned}
$$
\end{proof}

\begin{proof}[Proof of Theorem \ref{multiclassoptimal}]
	From the discussion above, we know that the problem is equivalent to \begin{equation}\label{FSRcon}
			\min \frac{\sum_{i \in D^{test}}\mathbb{I}(\hat{Y}_{i} = 0)}{n^{test}} , \
			s.t. \ \mathbb{P}(Y = \hat{Y}| \hat{Y} \neq 0) \geq \alpha.
	\end{equation}Where $$\beta =  \frac{(n^{test} - n^{id})\cdot \alpha}{n^{test}},$$ and $n^{id}$ is the number of indecisions of the solution of problem \eqref{FSRcon}. And due to the monotonicity of $\alpha$ and $ n^{id}$, we know that $\alpha$ and $\beta$ are in one-to-one correspondence. From Section \ref{sectionthree}, we know that $S(x)$ can reach the minimum value of $\mbox{ETS}$ when controlling \mbox{mFSR} to be $\alpha$. Then, through the mapping $ F(Y)$, we know that $S(x)$ can reach the minimum value of $||F(\hat{Y})||_{1}$ when controlling $ R$ to be $\beta$.
\end{proof}

\begin{proof}[Proof of Theorem \ref{optimaloracleKclass}]
According to the definition, there is obviously $ P(Y\neq \hat{Y} | S\geq t) = P(Y\neq Y^{\prime} | S\geq t) $. where $$\hat{Y}_{j} = \underset{i}{{\arg\max} \, S_{j}^{i}} \cdot \mathbb{I}(S_{j} \geq t) , \ Y^{\prime}_{j} = \underset{i}{{\arg\max} \, S_{j}^{i}}. $$ for $i = 1,2,...,K $ and $ j = 1,2,...,n^{test} $. The proof is consistent with the binary classification situation. 
\begin{equation}\label{eq_mfsr_K}
	\begin{split}
		\mbox{mFSR}(t) = \frac{\mathbb{E} \{ \sum_{i\in D^{test}}(1-S_{i})\cdot \mathbb{I}(S_{i}\geq t)\}}{\mathbb{E}\{\sum_{i\in D^{test}}\mathbb{I}(S_{i}\geq t)\}},
	\end{split}
\end{equation}
where $ S_{i} = \max\{S^{1}(x_{i}),S^{2}(x_{i}),...,S^{K}(x_{i}) \}$. From the formula above, we have:
$$\alpha \cdot \mathbb{E}\left\{\sum_{i\in D^{test}}\mathbb{I}(S_{i}\geq t)\right\} = \mathbb{E} \left\{ \sum_{i\in D^{test}}(1-S_{i})\cdot \mathbb{I}(S_{i}\geq t)\right\} $$ 
Under the definition of $\delta_{OR}$, we have \begin{equation}
    \mathbb{E} \left\{\sum_{i\in D^{test}}(1-S_{i}-\alpha)\cdot\mathbb{I}(S_{i} \geq t)\right\} = 0,
\end{equation}
\begin{equation}\label{eq3_K}
    \mathbb{E} \left\{\sum_{i\in D^{test}}\sum_{j = 1}^{K}(1-S^{j}_{i}-\alpha)\cdot\mathbb{I}(\delta_{OR}^{i} = j)\right\} = 0,
\end{equation} and have the equation $ \sum_{j = 1}^{K}S^{j} = 1 $. Let $\delta \in \{0,1,2,..,K\}^{n^{test}}$ be a general selection rule in $D_{\alpha}$. Then the \mbox{mFSR} constraints for $\delta$ implies that 
\begin{equation}\label{eq4_K}
	\mathbb{E} \left\{\sum_{i\in D^{test}}\sum_{j = 1}^{K}(1-S^{j}_{i}-\alpha)\cdot\mathbb{I}(\delta^{i} = j)\right\} \leq 0.
\end{equation}
The \mbox{ETS} of $\delta = \delta_{i}: 1 \leq i \leq n^{test}$ is given by
\begin{equation}
	\mbox{ETS} = \mathbb{E} \left\{ \sum_{i \in D^{test}} \sum_{j=1}^{K}S^{j}_{i}\cdot\mathbb{I}(\delta^{i} = j) \right\}.
\end{equation}
According to equations \eqref{eq3_K} and \eqref{eq4_K}, we have :
\begin{equation}\label{eq6_K}
	\mathbb{E}\left\{\sum_{i \in D^{test}} \sum_{j=1}^{K}(1- S^{j}_{i}-\alpha)\cdot(\mathbb{I}(\delta^{i}_{OR} = j)-\mathbb{I}(\delta^{i} = j)) \right\} \geq 0.
\end{equation} 
Let $\lambda_{OR} = (1-Q^{-1}(\alpha)-\alpha)/Q^{-1}(\alpha)$. From equation \eqref{lam0}, we have $ t \leq 1-\alpha$, so $ Q^{-1}(\alpha) \leq 1- \alpha $, $\lambda \geq 0 $. 
So the oracle rule can be written as $$\delta^{i}_{OR} = \underset{j}{{\arg\max} \, S^{j}_{i}} \cdot \mathbb{I}\{\frac{1 - S_{i}-\alpha}{S_{i}}<\lambda_{OR}\},$$ due to the monotonicity of the function$$ f(x) = \frac{x-\alpha}{1- x}.$$

Using the expression and techniques similar to the Neyman-Pearson Lemma, when $\mathbb{I}(\delta^{i}_{OR} = j) = 1$, then $1 - S_{i}^{j} - \alpha -\lambda_{OR}  S_{i}^{j} < 0$, $\mathbb{I}(\delta^{i}_{OR}) - \mathbb{I}(\delta^{i}) \geq 0 $, and vice versa.  So we claim that the following result holds:
$$(\mathbb{I}(\delta^{i}_{OR}= j)-\mathbb{I}(\delta^{i}= j))\cdot(1-S^{j}_{i}-\alpha -\lambda_{OR}  S^{j}_{i}) \leq 0.$$
It follows that:
\begin{equation}\label{eq7_K}
	\mathbb{E}\left\{\sum_{i\in D^{test}} \sum_{j=1}^{K}(\mathbb{I}(\delta^{i}_{OR}= j)-\mathbb{I}(\delta^{i}= j))\cdot(1-S^{j}_{i}-\alpha-\lambda_{OR}S^{j}_{i})\right\} \leq 0.
\end{equation}
According to equation \eqref{eq6_K} and \eqref{eq7_K} we have \begin{equation}
		\lambda_{OR}(\mbox{ETS}_{\delta_{OR}}- \mbox{ETS}_{\delta}) = \lambda_{OR} \sum_{i \in D^{test}} \sum_{j=1}^{K}(\mathbb{I}(\delta^{i}_{OR}= j)-\mathbb{I}(\delta^{i}= j))\cdot S^{j}_{i}\geq 0.
\end{equation}
\end{proof}

\begin{proof}[Proof of Theorem \ref{data_driven_K}]
From the data-driven procedure, we only need to estimate $\frac{V^{test}(\tau)}{V^{cal}(\tau)+1}$
    \begin{equation}
	\begin{split}
		\mbox{FSP}(\tau) &= \frac{V^{test}(\tau)}{V^{cal}(\tau)+1}\cdot \frac{V^{cal}(\tau)+1}{R^{test}(\tau)\bigvee 1} \\
		&= \hat{Q}(\tau)\cdot \frac{n^{cal}+1}{n^{test}} \cdot \frac{V^{test}(\tau)}{V^{cal}(\tau)+1} \\
		&\leq \alpha \cdot \frac{n^{cal}+1}{n^{test}} \cdot \frac{V^{test}(\tau)}{V^{cal}(\tau)+1}.
	\end{split}
\end{equation}
and due to the exchangeability between calibration data and test data, $\frac{V^{test}(s_{k})}{V^{cal}(s_{k})+1}$ is still a backward martingale, where $s_{k}$ corresponds to the threshold(time) when exactly k subjects, combining the subjects in both $ D^{cal}$ and $ D^{test}$, are mistakenly classified, that is whose score of the true class is not the maximum  in $\{S^{1}, S^{2},...,S^{K}\}$, so $ V^{test}(t_{l})$ is $ W^{test}$ and $ V^{cal}(t_{l})$ is $ W^{cal}$. So
\begin{equation}
	\begin{split}
		\mbox{FSR} &= \mathbb{E}(\mbox{FSP}(\tau)) \\
		&\leq \alpha \cdot \mathbb{E}[\frac{n^{cal} + 1}{n^{test}}\cdot  \mathbb{E}\{\frac{V^{test}(\tau)}{V^{cal}(\tau) + 1}| D^{cal},D^{test}\}] \\ 
		&\leq\alpha\cdot\mathbb{E}\{\frac{p_{test}}{p_{cal}}\}.
	\end{split}
\end{equation}
Here, we also control the \mbox{FSR} at $\alpha\cdot\mathbb{E}\{\frac{p_{test}}{p_{cal}}\}$, so we need to ensure that $\mathbb{E}\{\frac{p^{test}}{p^{cal}}\}$ is still 1.

\begin{equation}\begin{split}
    \mathbb{E}(p^{test}) &= 1 - \sum_{i = 1}^{K}\mathbb{P}(\underset{j}{{\arg\max}} \, S^{j}(x) = i, c = i) \\
    & = 1 - \sum_{i = 1}^{K}\mathbb{P}(\underset{j}{{\arg\max}} \, S^{j}(x) = i | c = i) \cdot \mathbb{P}(c = i)\end{split}
\end{equation}
where $x \in D^{test} $ or $D^{cal}$, and \begin{equation}
\begin{split}
        \mathbb{P}(\underset{j}{{\arg\max}} \,  S^{j}(x) = i | c = i) &= \sum_{\sigma\{ 1,2,..i-1,i+1,...,K\}} \int^{1}_{\frac{1}{n}}f_{i}^{i}(s_{i})\cdot\int^{s_{i}}_{\frac{1-s_{i}}{n-1}}f_{i}^{\sigma(1)}(s_{2})\cdot ... \\ & \cdot\int^{s_{K-2}}_{\frac{1-s_{i}-s_{1}-...-s_{K-2}}{2}}f_{i}^{\sigma(K-1)}(s_{K-1}) ds_{K-1}...ds_{1}ds_{i}
\end{split}
\end{equation}
is independent of $\pi$. With the exchangeability between $D^{test}$ and $D^{cal}$, if the rank of function matrix F,
$$
 F = \left\{
 \begin{matrix}
    F^{1}_{1} & F^{1}_{2} & ... & F^{1}_{K}\\  
    F^{2}_{1} & F^{2}_{2} & ... & F^{2}_{K}\\
    ... & ... & ... & ...\\
    F^{K}_{1} & F^{K}_{2} & ... & F^{K}_{K}
  \end{matrix}
  \right\}
$$ is K, then by Cramer's Rule, $p^{test}_{i} = p^{cal}_{i}$ for each $i$, so $\mathbb{E}(p^{test}) = \mathbb{E}(p^{cal})$. The proof is complete.
\end{proof}

\end{appendix}

\end{document}